\numberwithin{equation}{section}
\newtheorem{theorem}{Theorem}[section]
\newtheorem{definition}{Definition}[section]
\newtheorem{lemma}[theorem]{Lemma}
\newtheorem{conjecture}[theorem]{Conjecture}
\newtheorem{proposition}[theorem]{Proposition}
\newtheorem{corollary}[theorem]{Corollary}
\newtheorem{remark}[theorem]{Remark}
\def\bclaim{\begin{claim}}
\def\eclaim{\end{claim}}
\def\bdefin{\begin{definition}}
\def\edefin{\end{definition}}
\def\bcor{\begin{corollary}}
\def\ecor{\end{corollary}}
\def\bthm{\begin{theorem}}
\def\ethm{\end{theorem}}
\def\bconj{\begin{conjecture}}
\def\econj{\end{conjecture}}
\def\blem{\begin{lemma}}
\def\elem{\end{lemma}}
\def\blemma{\begin{lemma}}
\def\elemma{\end{lemma}}
\def\bprop{\begin{proposition}}
\def\eprop{\end{proposition}}
\def\bremark{\begin{remark}}
\def\eremark{\end{remark}}
\newcommand{\PP}{{\mathbb P}} \newcommand{\RR}{\mathbb{R}}
\newcommand{\QQ}{\mathbb{Q}} 
\newcommand{\ZZ}{{\mathbb Z}}
\def\o{\omega}
\newcommand{\dbar}{\bar\partial}
\newcommand{\ddbar}{\partial\dbar}
\newcommand{\vol}{{\operatorname{Vol}}}
\def\max{{\operatorname{max}}}
\def\Ric{\hbox{\rm Ric}\,}
\def\beq{\begin{equation}}
\def\eeq{\end{equation}}
\def\beqno{\begin{equation*}}
\def\eeqno{\end{equation*}}
\def\eaeq{\end{aligned}}
\def\baeq{\begin{aligned}}
\def\bpf{\begin{proof}}
\def\epf{\end{proof}}
\def\eaeq{\end{aligned}}
\def\baeq{\begin{aligned}}
\chardef\inodot="10
\newcommand{\bQ}{\mathbb{Q}}
\newcommand{\bP}{\mathbb{P}}
\newcommand{\bZ}{\mathbb{Z}}
\newcommand{\bR}{\mathbb{R}}
\newcommand{\ord}{\mathrm{ord}}
\newcommand{\Vol}{\mathrm{Vol}}
\newcommand{\cX}{\mathcal{X}}
\newcommand{\Pic}{\mathrm{Pic}}
\newcommand{\cL}{\mathcal{L}}
\title[Optimal volume upper bound for K\"ahler manifolds]{
On the optimal volume upper bound for K\"ahler manifolds with positive Ricci curvature}
\begin{document}

\author[Kewei Zhang]{Kewei Zhang \\(W\MakeLowercase{ith an} A\MakeLowercase{ppendix by} Y\MakeLowercase{uchen} L\MakeLowercase{iu})}
\address{Beijing International Center for Mathematical Research, Peking University.}
\email{kwzhang@pku.edu.cn}

\maketitle

\begin{abstract}
Using $\delta$-invariants and Newton--Okounkov bodies, we derive the optimal volume upper bound for K\"ahler manifolds with positive Ricci curvature, from which we get a new characterization of the complex projective space.
\end{abstract}
\maketitle

\section{Introduction}
\label{sec:intro}

Let $(X,g)$ be an $m$-dimensional Riemannian manifold such that
$$
\Ric(g)\geq (m-1)g.
$$
Then the well-known Bishop--Gromov volume comparison says that
$$
\vol(X,g)\leq\vol(S^{m}, g_{_{S^{m}}}),
$$
and the equality holds if and only if $(X,g)$ is isometric to the standard $m$-sphere $S^m$. However, suppose in addition that $X$ has a complex structure $J$ such that $(X,g,J)$ is K\"ahler, then G. Liu \cite{L14} shows that this volume upper bound is \emph{never sharp} (unless $X=\PP^1$), in the sense that there exists a dimensional gap $\epsilon(n)>0$ such that
$$
\vol(X,g)\leq\vol(S^{m}, g_{_{S^{m}}})-\varepsilon(n).
$$
This distinguishes the K\"ahler geometry from the Riemannian case. So it is natural to ask what the optimal volume upper bound in the K\"ahler setting should be. A folklore conjecture predicts that, in the K\"ahler setting, the complex projective space equipped with the Fubini--Study metric should attain the maximal volume. Regarding this problem, a significant progress was made by K. Fujita \cite{F}, who gave an affirmative answer in the case of K\"ahler--Einstein manifolds, and whose approach is, interestingly enough, purely algebraic. K. Fujita's breakthrough was also mentioned by S.K. Donaldson in his 2018 ICM talk (cf. \cite{Don18}). However, an answer for general K\"ahler manifolds with positive Ricci curvature is still missing.

The purpose of this paper is to completely solve this problem by using some key input from algebraic geometry and convex geometry.
Our main result is stated as follows.

\begin{theorem}
\label{thm:vol-kahler}
Let $(X,\o)$ be an $n$-dimensional K\"ahler manifold with
$
\Ric(\o)\geq(n+1)\o.
$
Then one has
$
\vol(X,\omega)\leq\vol(\PP^n,\omega_{FS}),
$
and the equality holds if and only if $(X,\o)$ is biholomorphically isometric to $(\PP^n,\o_{FS})$. Here $\o_{FS}$ denotes the Fubini--Study metric so that $\int_{\PP^n}\omega_{FS}^n=(2\pi)^n$.
\end{theorem}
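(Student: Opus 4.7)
The plan is to convert the K\"ahler-geometric volume comparison into an algebraic inequality for a suitable polarization and then apply a Newton-Okounkov body estimate in the spirit of K.~Fujita's resolution of the K\"ahler-Einstein case. At the cohomological level, $\Ric(\omega)\geq(n+1)\omega$ gives $2\pi c_1(X)\geq(n+1)[\omega]$; in particular $X$ is Fano, and setting $L:=[\omega]/(2\pi)$ yields $(n+1)L\leq -K_X$ as nef classes. The target inequality $\int_X\omega^n\leq(2\pi)^n$ is thus equivalent to the purely algebraic statement $L^n\leq 1$.

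The crucial step is to upgrade the analytic Ricci lower bound into the sharp stability estimate
\[
\delta(X,L)\geq n+1,
\]
where $\delta$ is the Fujita-Odaka threshold of the polarized pair $(X,L)$. Following the Berman-Boucksom-Jonsson philosophy, extended to a general (possibly irrational) polarization, the twisted Monge-Amp\`ere inequality encoded by $\Ric(\omega)-(n+1)\omega\geq 0$ should provide uniform Ding- or Mabuchi-type coercivity along every test configuration of $(X,L)$, which then translates algebraically into the claimed lower bound on $\delta(X,L)$. I expect this step to be the main obstacle: the classical BBJ correspondence is tailored to $L=-K_X$, so adapting it to the twisted setting with a non-trivial nef class $-K_X-(n+1)L$, while retaining the sharp constant $n+1$, is delicate; the potential irrationality of $[\omega]$ further requires a careful approximation by rational K\"ahler classes.

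With the $\delta$-estimate in hand, I would run Fujita's Newton-Okounkov body argument. For an admissible flag $Y_\bullet$ on a suitable birational model of $X$, the bound $\delta(X,L)\geq n+1$ forces $A_X(E)\geq(n+1)S_L(E)$ for every divisorial valuation $E$; applied successively to the flag divisors, and combined with $(n+1)L\leq -K_X$ to control the log discrepancies, this should confine the Newton-Okounkov body $\Delta_{Y_\bullet}(L)\subset\RR^n_{\geq 0}$ inside the standard simplex
\[
\bigl\{x\in\RR^n_{\geq 0}\ :\ x_1+\cdots+x_n\leq 1\bigr\},
\]
whose volume is $1/n!$. Hence $L^n=n!\operatorname{vol}(\Delta_{Y_\bullet}(L))\leq 1$, which is the desired inequality.

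For the equality case, $L^n=1$ forces $\Delta_{Y_\bullet}(L)$ to coincide with the standard simplex for every admissible flag; this combinatorial rigidity, together with saturation of the $\delta$-inequalities at each level of the flag, should identify $(X,L)$ with $(\PP^n,\mathcal{O}(1))$. In particular $(n+1)L=-K_X$, so the cohomological inequality of the first step is an equality. Pairing $\Ric(\omega)\geq(n+1)\omega$ with $[\omega]^{n-1}$ then yields integral, hence pointwise, equality $\Ric(\omega)=(n+1)\omega$, so $\omega$ is K\"ahler-Einstein on $\PP^n$, and Bando-Mabuchi uniqueness provides the desired biholomorphic isometry with $(\PP^n,\omega_{FS})$.
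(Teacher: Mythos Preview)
Your overall architecture---reduce to an algebraic inequality via $\delta$, then treat equality, then invoke Bando--Mabuchi---matches the paper, and your worry about the $\delta$-estimate is largely unfounded: the identity $\beta(X,L)=\min\{\epsilon(X,L),\delta(X,L)\}$ for an arbitrary ample $\QQ$-line bundle on a Fano manifold is already established (Berman--Boucksom--Jonsson; see also the appendix to Cheltsov--Rubinstein--Zhang), so $\Ric(\omega)\geq(n+1)\omega$ gives $\delta(X,L)\geq\beta(X,L)\geq n+1$ directly. The irrationality issue is handled by lower semi-continuity of $\beta$ under perturbation to rational classes.

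The genuine gap is in how you extract the volume bound from $\delta\geq n+1$. The inequality $A_X(E)\geq(n+1)S_L(E)$ controls the \emph{expected vanishing order} $S_L(E)$---the barycenter of the Okounkov body in the $E$-direction---not the extent of the body. Applying it to the flag divisors $Y_1,\dots,Y_n$ (each with $A_X(Y_i)=1$) gives $S_L(Y_i)\leq 1/(n+1)$, which does not confine $\Delta_{Y_\bullet}(L)$ to the unit simplex; there is no such containment in general. The paper instead uses a \emph{single} divisorial valuation: blow up a point $p$, take the exceptional divisor $E$ with $A_X(E)=n$, and combine $n\geq(n+1)S_L(E)$ with Fujita's pointwise estimate $\vol(\sigma^*L-xE)\geq\vol(L)-x^n$ to get $\vol(L)\leq 1$ by elementary integration. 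Okounkov bodies do not enter the inequality at all.

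Your equality argument is also off-track. ``$\Delta_{Y_\bullet}(L)$ equals the simplex for every flag'' is not a known characterization of $\PP^n$, and your route never produces it anyway. The paper's mechanism is: equality in the volume estimate forces $\vol(\sigma^*\xi-xE)=\vol(\xi)-x^n$ on $[0,(n+1)/\beta]$; Okounkov bodies (with flag $Y_1=E$) are then used---via the K\"uronya--Lozovanu positivity criterion and a Brunn--Minkowski rigidity argument---to deduce that $\sigma^*\xi-\frac{n+1}{\beta}E$ is \emph{nef}. Adding the nef class $-K_X-\beta\xi$ (from $\epsilon(X,\xi)\geq\beta$) shows the Seshadri constant of $-K_X$ at every point is at least $n+1$, and Cho--Miyaoka--Shepherd-Barron / Kebekus then gives $X\cong\PP^n$. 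Your final step (showing $\omega$ is KE and invoking Bando--Mabuchi) is correct, though the paper does it more cleanly via the $\ddbar$-lemma rather than by pairing with $[\omega]^{n-1}$.
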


This result gives a new characterization of the complex projective space in terms of Ricci and volume, and extends the previous works of Berman--Berndtsson \cite{BB}, F. Wang \cite{W} and K. Fujita \cite{F} (see also Y. Liu \cite{Liu18}) to general K\"ahler classes. As we shall see, while the statement of Theorem \ref{thm:vol-kahler} is differential geometric, its proof turns out to be 
rather algebraic.

Indeed, K\"ahler manifolds with positive Ricci curvature are automatically Fano. These are simply connected projective manifolds with many additional algebraic properties. So in what follows, unless otherwise specified, we will always assume that $X$ is an $n$-dimensional Fano manifold. Note that the Picard group $\operatorname{Pic}(X)\cong H^2(X,\ZZ)$ is a finitely generated torsion free Abelian group, hence a lattice. So the isomorphism classes of line bundles on $X$ are in one-to-one correspondence with the lattice points of $H^2(X,\ZZ)$. 
Also note that the \emph{K\"ahler cone} $\mathcal{K}(X)$ of $X$ coincides with its \emph{ample cone}, as $H^2(X,\RR)=H^{1,1}(X,\RR)$ by Kodaira vanishing and Hodge decomposition. So any K\"ahler class in $\mathcal{K}(X)$ can be approximated by a sequence of rational classes (corresponding to ample $\QQ$-line bundles).

Based on this, instead of using traditional Riemannian geometry, we will prove Theorem \ref{thm:vol-kahler} from an algebraic viewpoint, which requires several tools that have been developed in the K-stability theory. First of all, we will reformulate the curvature condition in terms of Tian's \emph{greatest Ricci lower bound}, which in turn can be related to the algebraic $\delta$-invariant (see \eqref{eq:def-delta-L}) thanks to the recent result of the author \cite[Appendix]{CRZ} and independently \cite[Theorem C]{BBJ18}. Then by modifying the argument of K. Fujita \cite{F}, we get the desired volume upper bound for K\"ahler classes. The essential difficulty of Theorem \ref{thm:vol-kahler} lies in the characterization of the equality. For this, we will compute the anti-canonical Seshadri constant as in \cite{F}. However, when dealing with general K\"ahler classes, the irrationality causes some subtle issue. To overcome this, we need some key observations from convex geometry. In particular, Newton--Okounkov bodies and the positivity criterion of K\"uronya--Lozovanu \cite{KL} will be crucially used to treat the equality case of Theorem \ref{thm:vol-kahler}.

Apart from the optimal volume upper bound, quantitative volume rigidity is also an important property in geometry.
Recall that, the classical sphere gap theorem in Riemannian geometry says the following:
\begin{theorem}\label{thm:cc}\cite[Theorem A.1.10]{CC97}.
There exists $\varepsilon(m)>0$, such that if $(M,g)$ is an $m$-dimensional Riemannian manifold with
$\Ric(g)\geq(m-1)g$ and $\vol(M,g)\geq\vol(S^m)-\varepsilon(m)$, then
$M$ is diffeomorphic to $S^m$.
\end{theorem}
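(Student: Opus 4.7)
The plan is to argue by contradiction using Gromov's precompactness theorem together with the Cheeger--Colding volume convergence and stability theorems. Suppose the conclusion fails, so there exists a sequence of $m$-dimensional Riemannian manifolds $(M_i,g_i)$ with $\Ric(g_i)\geq(m-1)g_i$ and $\vol(M_i,g_i)\to\vol(S^m,g_{S^m})$, yet no $M_i$ is diffeomorphic to $S^m$. By Bishop--Gromov each $M_i$ has $\diam(M_i,g_i)\leq\pi$, and the uniform Ricci lower bound together with the uniform diameter bound furnishes a uniform doubling condition. Gromov's compactness theorem then yields a subsequence (still denoted $(M_i,g_i)$) converging in the Gromov--Hausdorff topology to a compact metric space $(X_\infty,d_\infty)$.

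Next, I would identify the limit with the round sphere. Colding's volume convergence theorem gives
\[
\mathcal{H}^m(X_\infty)=\lim_{i\to\infty}\vol(M_i,g_i)=\vol(S^m,g_{S^m}).
\]
Moreover, Bishop--Gromov comparison passes to the limit under non-collapsed Ricci-limit convergence, so $\mathcal{H}^m(B_r(x))\leq \vol(B_r\subset S^m)$ for every $x\in X_\infty$ and every $r\leq\pi$. Summing these ball bounds against the total volume equality forces each ball inequality to be an equality, and the rigidity part of Bishop--Gromov on the limit (established via the Cheeger--Colding splitting and almost-rigidity package) then forces $(X_\infty,d_\infty)$ to be isometric to $(S^m,g_{S^m})$.

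Finally, since the limit is a smooth closed Riemannian manifold, I would invoke the Cheeger--Colding stability/diffeomorphism theorem: a non-collapsed Gromov--Hausdorff convergent sequence of manifolds with a uniform Ricci lower bound whose limit is a smooth Riemannian manifold is eventually diffeomorphic to that limit. Applied here, this gives $M_i \isom S^m$ diffeomorphically for all large $i$, contradicting our standing hypothesis and producing the desired $\varepsilon(m)>0$.

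The hard part is unquestionably the last step. Gromov--Hausdorff closeness to a smooth manifold does not automatically upgrade to a diffeomorphism, and the only known route is the full $\varepsilon$-regularity theory of Cheeger--Colding: almost-splitting via harmonic approximations, $C^{1,\alpha}$ Reifenberg-type charts on the regular set, and quantitative bounds ruling out singular behaviour when the entire limit is smooth. The contradictory assumption (the whole sphere volume is attained in the limit) is precisely what ensures the regular set exhausts $X_\infty$, so that these local diffeomorphisms patch to a global one.
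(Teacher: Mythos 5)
This statement is not proved in the paper at all: it is quoted verbatim as background from Cheeger--Colding \cite[Theorem A.1.10]{CC97}, and your sketch (contradiction via Gromov precompactness, Colding's volume convergence, maximal-volume rigidity of the non-collapsed limit, then the Cheeger--Colding smooth-stability/diffeomorphism theorem) is essentially the standard argument from that source, so it is the right proof in outline. Only a cosmetic remark: the diameter bound $\diam\leq\pi$ comes from Bonnet--Myers rather than Bishop--Gromov, and all the genuinely hard content is, as you acknowledge, delegated to the Cheeger--Colding machinery rather than reproved.
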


So it is satisfactory to have the following K\"ahlerian analogue, which was stated as a conjecture in an earlier version of this paper. The proof is due to Yuchen Liu (in the toric setting, this was also obtained by F. Wang \cite{W} using combinatoric methods).

\begin{theorem}\label{thm:volgap}
There exists $\varepsilon(n)>0$, such that if $(X,\o)$ is an $n$-dimensional K\"ahler manifold with
$\Ric(\o)\geq(n+1)\o$ and $\vol(X,\omega)\geq\vol(\PP^n,\omega_{FS})-\varepsilon(n)$, then
$X$ is biholomorphic to $\PP^n$.
\end{theorem}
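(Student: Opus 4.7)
\emph{Strategy.} My plan is to argue by contradiction: suppose no such $\varepsilon(n)$ works, and produce a sequence of $n$-dimensional K\"ahler manifolds $(X_i,\o_i)$ with $\Ric(\o_i)\geq(n+1)\o_i$, $\vol(X_i,\o_i)\to\vol(\PP^n,\o_{FS})$, but $X_i\not\cong\PP^n$ for any $i$. Each $X_i$ is automatically Fano. I would first approximate $[\o_i]$ by ample $\QQ$-classes $L_i\in\Pic(X_i)\otimes\QQ$. Using the identification of Tian's greatest Ricci lower bound with the algebraic $\delta$-invariant (\cite[Appendix]{CRZ}, \cite[Theorem C]{BBJ18}) that already drives Theorem \ref{thm:vol-kahler}, the curvature hypothesis yields $\delta(X_i,L_i)\geq 1-o(1)$, while the near-maximal volume hypothesis forces $\vol(L_i)$ to approach the sharp value appearing in Theorem \ref{thm:vol-kahler}. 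In particular $(-K_{X_i})^n$ would be uniformly bounded above (by Fujita's inequality) and uniformly away from zero.

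\emph{Compactness and the limit.} Next I would invoke boundedness of Fano manifolds: by Birkar's BAB theorem smooth Fano $n$-folds form a bounded family, and refinements (Jiang, Liu--Xu) give the same for Fano pairs with $\delta$-invariant of a polarisation bounded below. Up to a subsequence this places $(X_i,L_i)$ as fibers of a flat projective family $(\cX,\cL)\to T$ over an irreducible base, with all $X_i$ smooth in the same connected component, and lets me pick a specialisation producing a (possibly singular) limit $(X_0,L_0)$. Continuity of the volume in flat families together with lower semi-continuity of the $\delta$-invariant in $\QQ$-Gorenstein families (Blum--Liu) then forces $(X_0,L_0)$ to attain the equality in the polarised form of Theorem \ref{thm:vol-kahler}, and the equality characterisation therein gives $X_0\cong\PP^n$.

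\emph{Rigidity and main obstacle.} Since $H^1(\PP^n,T_{\PP^n})=0$, $\PP^n$ is rigid under small deformations, so all $X_i$ in some Zariski neighborhood of the specialisation must themselves be biholomorphic to $\PP^n$, contradicting the standing assumption and proving the theorem. The hardest part is the compactness step together with the limiting argument: I need the equality characterisation of Theorem \ref{thm:vol-kahler} to survive passage to potentially singular algebraic limits, so a polarised, algebraic reformulation of the equality case (applicable to $\QQ$-Gorenstein Fano pairs, not merely to smooth K\"ahler manifolds) is what makes the argument go through. A secondary subtlety is arranging the rational approximations $L_i$ of the arbitrary K\"ahler classes $[\o_i]$ so that $\delta(X_i,L_i)$ and $\vol(L_i)$ both behave continuously at the limit; here the convex-geometric input (Newton--Okounkov bodies, K\"uronya--Lozovanu) used for the equality case of Theorem \ref{thm:vol-kahler} should again be the right tool.
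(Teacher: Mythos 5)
Your overall skeleton---argue by contradiction, pass to ample $\QQ$-classes, invoke boundedness of Fano manifolds, take a limit, and finish by rigidity of $\PP^n$---matches the appendix, but the middle of your argument has a genuine gap that the paper's proof is specifically engineered to avoid. You degenerate the manifolds themselves to a possibly singular limit $(X_0,L_0)$ and want to apply the equality case of Theorem \ref{thm:main-general} there. Two things break. First, the semicontinuity you quote points the wrong way: by \cite{BL18}, $\delta$ is lower semicontinuous in the Zariski topology, so under specialization the $\delta$-invariant can only drop; at the special fiber you get an \emph{upper} bound $\delta(X_0,L_0)\leq\delta$ of the nearby fibers, whereas forcing $(X_0,L_0)$ to attain equality in $\delta^n\vol\leq(n+1)^n$ requires a \emph{lower} bound on $\delta(X_0,L_0)$. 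Second, even granting equality at the limit, the equality characterization available in the paper (Theorem \ref{thm:main-rational}, Theorem \ref{thm:main-general}, via Proposition \ref{prop:Fujita-nef-thm-for-R-divisor} and \cite{CMSB,K02}) is proved only for smooth Fano manifolds; the ``polarised, algebraic reformulation applicable to $\QQ$-Gorenstein Fano pairs'' that you correctly identify as the crux is neither in the paper nor supplied by you, so it remains a missing ingredient rather than a completed step. You also leave the boundedness of the polarizations in $N^1$ unaddressed; since the $L_i$ are different classes on different manifolds, without such a bound there is no meaningful limit class $L_0$ at all.

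The appendix never degenerates the manifolds. Boundedness of smooth Fano $n$-folds \cite{C92,KMM} (BAB is not needed here) puts all $X_i$, after passing to a subsequence, into one smooth family $\pi:\cX\to T$; one then fixes a \emph{very general} fiber $\cX_t$, so that lower semicontinuity of $\delta$ and of $\epsilon$ works in the favorable direction, $\delta(\cX_t,\cL_{i,t})\geq\delta(X_i,L_i)$ and $\epsilon(\cX_t,\cL_{i,t})\geq\epsilon(X_i,L_i)$, transplants and rescales the classes, shows via the Seshadri-constant Lemma \ref{lem:sesh} that they form a bounded sequence in $N^1(\cX_t)_\RR$, and passes to a limit class $\xi_\infty$ on the \emph{fixed smooth} manifold $\cX_t$. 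There the $\RR$-divisor machinery already developed (the dominated-convergence argument of Lemma \ref{lem:Fujita-ineq-Kahler-class} and the Newton--Okounkov criterion of Proposition \ref{prop:Fujita-nef-thm-for-R-divisor}) shows that the Seshadri constant of $-K_{\cX_t}$ at a general point is at least $n+1$, whence $\cX_t\cong\PP^n$ by \cite{BS09,LZ}, and Koll\'ar's rigidity of $\PP^n$ in smooth families \cite{K96} transfers this to every $X_i$, giving the contradiction. If you want to salvage your degeneration route, you would have to prove the singular polarized equality statement and reverse the semicontinuity difficulty yourself; the paper's fixed-fiber trick is precisely the device that makes both problems disappear.
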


The rest of this paper is organized as follows. In Section \ref{sec:pre} we review some necessary notions and tools from the literature. In Section \ref{sec:rational-class} we prove Theorem \ref{thm:vol-kahler} by assuming that $[\o]$ is (a multiple of) a rational class. In Section \ref{sec:Kahler}, we prove Theorem \ref{thm:vol-kahler} in full generality. In the appendix provided by Y. Liu, Theorem \ref{thm:volgap} is proved.

\begin{remark}
After completing the first draft of this paper, the author was kindly informed by Feng Wang that he had also independently obtained the inequality in Theorem \ref{thm:vol-kahler} by adopting the argument of \cite{F} to a sequence of conic KE metrics.
\end{remark}

\textbf{Acknowledgments.}
The author would like to thank Kento Fujita, Feng Wang and Chuyu Zhou for many helpful discussions. He is also grateful to Yuchen Liu for providing the proof of Theorem \ref{thm:volgap}. Thanks also go to Xiaohua Zhu and Yanir Rubinstein for valuable comments. Special thanks go to the anonymous referees whose comments helped improve and clarify this manuscript.
The author is supported by the China post-doctoral grant BX20190014.

\section{Preliminaries}
\label{sec:pre}
In this section $X$ is assumed to be an $n$-dimensional Fano manifold.
\subsection{The volume function on the N\'eron--Severi space}
\hfill\\
Note that, $H^{1,1}(X,\RR)$ can be identified with the N\'eron--Severi space $N^1(X)_\RR$, which consists of numerical equivalence classes of $\RR$-divisors on $X$. One can define a \emph{continuous} volume function $\vol(\cdot)$ on $N^1(X)_\RR$. When restricted to the K\"ahler cone $\mathcal{K}(X)$ (i.e., the ample cone), $\vol(\cdot)$ is the usual volume for K\"ahler classes (which will be treated as ample $\RR$-divisors in what follows).

Also recall that, a class $\xi\in N^1(X)_\RR$ is called \emph{nef} if for every curve $C$ on $X$
$$
\xi\cdot C\geq 0.
$$
For nef classes $\xi$, $\vol(\xi)$ is simply equal to the top self-intersection number $\xi^n$. 

A class $\xi\in N^1(X)_\RR$  is called \emph{big} if
$$\vol(\xi)>0.$$
For more details on this subject, we refer the reader to the standard reference \cite{L04}.

\subsection{The greatest Ricci lower bound}
\hfill\\
Let $\mathcal{K}(X)$ denote the K\"ahler cone of $X$.
For any K\"ahler class $\xi\in\mathcal{K}(X)$, one can naturally define its
greatest Ricci lower bound $\beta(X,\xi)$ to be
\footnote{We put a factor $2\pi$ in the definition for convenience.}
\begin{equation}
    \label{eq:def-beta-xi}
    \beta(X,\xi):=\sup\{\mu>0\ |\ \exists\text{ K\"ahler form }\o\in 2\pi\xi\ \text{s.t. }\Ric(\o)\geq\mu\o \}.
\end{equation}

Note that, by the Calabi--Yau theorem, given any K\"ahler form $\alpha\in 2\pi c_1(X)$, one can always find $\o\in 2\pi\xi$ such that
$
\Ric(\o)=\alpha>0.
$
By compactness of $X$ we see
$
\Ric(\o)\geq\epsilon\o
$
for some $\epsilon>0$. So $\beta(X,\xi)$
is always a positive number. On the other hand, $\beta(X,\xi)$ is naturally bounded from above by the Seshadri constant
\begin{equation}
    \label{eq:def-sashadri-L}
    \epsilon(X,\xi):=\sup\{\mu>0\mid c_1(X)-\mu \xi\text{ is nef}\}.
\end{equation}
Thus we always have
\begin{equation}
\label{eq:beta<=epsilon}
    0<\beta(X,\xi)\leq \epsilon(X,\xi).
\end{equation}

When $\xi=c_1(L)$ for some ample $\QQ$-line bundle $L$, we will write
$$
\beta(X,L):=\beta(X,c_1(L))
$$
for ease of notation.

\begin{remark}
When $\xi= c_1(X)$, the greatest Ricci lower bound was first studied by Tian \cite{TianGreatesLowerRicci}, although it was not explicitly defined there. It was
first explicitly defined by Rubinstein in \cite{R08,R09},
and was later further studied by Sz\'ekelyhidi
\cite{GaborGreatesLowerRicci}, Li \cite{Litoric},
Song--Wang \cite{SongWang}, Cable \cite{Cable}, et al.
\end{remark}

\subsection{The $\delta$-invariant}
\hfill\\
Let $L$ be an ample $\QQ$-line bundle on $X$.
Following \cite{FO,BJ17}, the $\delta$-invariant of $L$ is defined by
\begin{equation}
    \label{eq:def-delta-L}
    \delta(X,L):=\inf_{E}\frac{A_X(E)}{S_L(E)}.
\end{equation}
Here $E$ runs through all the prime divisors \emph{over} $X$ (i.e., $E$ is a divisor contained in some birational model $Y\xrightarrow{\pi}X$ over $X$).
Moreover,
$$
A_X(E):=1+\ord_E(K_Y-\pi^*K_X),
$$
denotes the log discrepancy, and
$$
S_L(E):=\frac{1}{\vol(L)}\int_0^\infty\vol(\pi^*L-xE)dx$$ 
denotes the expected vanishing order of $L$ along $E$. 
Note that $\delta$-invariant is also called \emph{stability threshold} in the literature, which plays important roles in the study of K-stability and has attracted intensive research attentions. When $L=-K_X$, it was proved by the author in the appendix of his joint work with Cheltsov and Rubinstein \cite{CRZ} that
$$\beta(X,-K_X)=\min\{1,\delta(X,-K_X)\}.$$
For arbitrary ample $\QQ$-line bundles, we have the following independent result by Berman--Boucksom--Jonsson \cite{BBJ18}, giving a geometric interpretation of $\delta$-invariants on Fano manifolds.
\begin{theorem}
\label{thm:beta=delta-general}
Let $L$ be an ample $\QQ$-line bundle on a Fano manifold $X$. Then one has
$$
\beta(X,L)=\min\{\epsilon(X,L),\delta(X,L)\}.
$$
\end{theorem}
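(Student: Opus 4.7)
The plan is to establish the two inequalities $\beta(X,L)\leq\min\{\epsilon(X,L),\delta(X,L)\}$ and $\beta(X,L)\geq\min\{\epsilon(X,L),\delta(X,L)\}$ separately. The upper bound $\beta(X,L)\leq\epsilon(X,L)$ is already recorded in \eqref{eq:beta<=epsilon}: if $\Ric(\o)\geq\mu\o$ for some $\o\in 2\pi c_1(L)$, then $\Ric(\o)-\mu\o$ is a semipositive representative of $c_1(X)-\mu c_1(L)$, which makes that class nef. For the remaining upper bound $\beta(X,L)\leq\delta(X,L)$, I would adapt the standard implication ``K\"ahler--Einstein existence implies K-semistability'' to the twisted setting. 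Given $\mu<\beta(X,L)$ and any prime divisor $E$ over $X$, attach to $E$ the natural geodesic ray $\Phi_E$ of singular metrics in $2\pi c_1(L)$ coming from the order-of-vanishing filtration along $E$, and compute the asymptotic slope at infinity of the twisted Ding functional along $\Phi_E$. The slope has the form $A_X(E)-\mu S_L(E)$ up to a non-negative contribution from the twist class $c_1(X)-\mu c_1(L)$, so lower boundedness of the functional, which follows from the assumed twisted KE inequality, forces $A_X(E)\geq\mu S_L(E)$; taking the infimum over $E$ yields $\delta(X,L)\geq\mu$.

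For the lower bound, fix $\mu<\min\{\epsilon(X,L),\delta(X,L)\}$. The condition $\mu<\epsilon(X,L)$ lets me pick a genuine K\"ahler form $\eta\in 2\pi(c_1(X)-\mu c_1(L))$, and it is then enough to solve the twisted K\"ahler--Einstein equation
\[
\Ric(\o)=\mu\o+\eta,\qquad \o\in 2\pi c_1(L),
\]
since any such $\o$ automatically satisfies $\Ric(\o)\geq\mu\o$ and thus witnesses $\mu\leq\beta(X,L)$. Writing $\o=\o_0+\i\ddbar\vp$ for a reference metric $\o_0\in 2\pi c_1(L)$, this reduces to a complex Monge--Amp\`ere equation with an exponential nonlinearity in $\vp$, whose smooth solvability is equivalent to coercivity of the twisted Mabuchi energy on the space of K\"ahler potentials.

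The main obstacle is converting the algebraic input $\delta(X,L)>\mu$ into this analytic coercivity, and here I would follow the non-Archimedean variational framework of \cite{BBJ18}. The Ding, Mabuchi and $J$ functionals extend to a non-Archimedean space of finite-energy potentials in which divisorial valuations sit densely, and their asymptotic slopes along the ray induced by a divisor $E$ are computed in terms of $A_X(E)$ and $S_L(E)$. A density-plus-lower-semicontinuity argument then turns the strict inequality $\delta(X,L)>\mu$ into a uniform $J$-coercivity estimate for the twisted Mabuchi energy; the direct method produces a finite-energy minimizer, which is upgraded to a smooth K\"ahler solution of the twisted KE equation by the standard Monge--Amp\`ere a priori estimates. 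By comparison, the upper bound directions are essentially formal once the correct functional framework is in place.
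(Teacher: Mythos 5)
The first thing to note is that the paper does not prove Theorem \ref{thm:beta=delta-general} at all: it is quoted as \cite[Theorem C]{BBJ18} (with the case $L=-K_X$ coming from the appendix of \cite{CRZ}), and only its statement is used later. So your sketch has to be measured against the argument it implicitly reconstructs, namely the variational, non-Archimedean proof of \cite{BBJ18}; and indeed your outline --- Ding-functional slopes along divisorial rays for the inequality $\beta\le\delta$, and twisted K\"ahler--Einstein existence via coercivity for the inequality $\beta\ge\min\{\epsilon,\delta\}$ (with $\beta\le\epsilon$ being \eqref{eq:beta<=epsilon}) --- is exactly that proof in outline form. The steps you defer, namely the computation of the asymptotic slope of the $L$-part along the maximal ray attached to $\ord_E$, the passage from $\delta(X,L)>\mu$ to coercivity of the twisted functional, and the regularity of the finite-energy minimizer, are precisely the technical substance of \cite{BBJ18}; so the proposal is a correct roadmap resting on the cited machinery rather than an independent argument, which is consistent with how the paper treats the statement.

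One step, as written, is logically off and needs repair. In the direction $\beta(X,L)\le\delta(X,L)$ you assert that the slope along $\Phi_E$ equals $A_X(E)-\mu S_L(E)$ ``up to a non-negative contribution from the twist class'', and that lower boundedness of the functional then forces $A_X(E)\ge\mu S_L(E)$. If the twist enters the slope with a non-negative sign --- as it does, for instance, if one works with the twisted Mabuchi functional, where the $J$-type energy of a nef twist has non-negative slope --- then slope $\ge 0$ only yields $A_X(E)-\mu S_L(E)\ge -(\text{twist term})$, which is weaker than what you need. The argument succeeds because, for the Ding-type functional, the twist is represented by a \emph{smooth} semipositive form: its potential is bounded, so it perturbs the reference measure in the $L$-part only by a bounded factor and contributes exactly zero to the asymptotic slope, which is therefore a positive multiple of $A_X(E)-\mu S_L(E)$ on the nose. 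You should also justify the lower boundedness itself: the hypothesis $\Ric(\o)\ge\mu\o$ produces a critical point of the twisted Ding functional whose twist is only semipositive, and it is Berndtsson's convexity along (finite-energy) geodesics --- this is exactly where the semipositivity is used --- that upgrades the critical point to a global minimizer. With these two points made precise your upper-bound direction matches the standard argument; the lower-bound direction is fine as a sketch, except that ``$\delta>\mu$ implies coercivity'' should be flagged as the main theorem of \cite{BBJ18}, not as a routine density-plus-semicontinuity step.
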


\subsection{Newton--Okounkov bodies and positivity of $\RR$-line bundles}
\hfill\\
We briefly recall the definition of Newton--Okounkov bodies; for more details we refer the reader to \cite{LM}.
Let $Y$ be an $n$-dimensional projective manifold.
Choose a flag of subvarieties
$$
Y_\bullet:\ Y=Y_0\supset Y_1\supset...\supset Y_{n-1}\supset Y_n=\{pt.\},
$$
such that each $Y_i$ is an irreducible subvariety of codimension $i$ and smooth at the point $Y_n$.
Such a flag is called \emph{admissible}.
Then any big class $\xi\in N^1(Y)_\RR$ can be associated with a convex body $\Delta_{Y_\bullet}(\xi)$ in $(\RR_{\geq0})^n$, which is called the Newton--Okounkov body of $\xi$ with respect to the flag $Y_\bullet$. This generalizes the classical polytope construction for divisors on toric varieties. 
A crucial fact is that
\begin{equation}
    \label{eq:vol=vol}
    \vol(\xi)=n!\vol_{\RR^n}(\Delta_{Y_\bullet}(\xi)).
\end{equation}
In this way one can study the volume function $\vol(\cdot)$ on $N^1(Y)_\RR$ using convex geometry. For more details of this construction we refer the reader to \cite{LM}.

It turns out that Newton--Okounkov bodies can also help us visualize the positivity of $\RR$-line bundles. More precisely, for any big $\RR$-divisor $\xi$, one can define its \emph{restricted base loci} by
$$
B_-(\xi):=\bigcup_A B(\xi+A),
$$
where the union is over all ample $\QQ$-divisors $A$ on Y and $B(\cdot)$ denotes the stable base loci (cf. \cite{ELMNP}). Then it is easy to see that
\begin{equation}
\xi\text{ is nef if and only if }B_-(\xi)=\emptyset.
\end{equation}
More precisely, $B_-(\xi)$ captures the non-nef locus of $\xi$ (see \cite[Example 1.18]{ELMNP}). Indeed, suppose that there exists some curve $C$ intersecting negatively with $\xi$, then by adding a small amount of ample $\QQ$-divisor A, one still has
$$
(\xi+A)\cdot C<0,
$$
which implies that $C\subset B(\xi+A)$ and hence
$$
C\subset B_-(\xi).
$$
The result of
K\"uronya--Lozovanu says that one can characterize the restricted base loci using Newton--Okounkov bodies.
\begin{theorem}\cite[Theorem A]{KL}
\label{thm:nef-criterion}
Let $\xi$ be a big $\RR$-divisor. Then the following are equivalent.
\begin{enumerate}
    \item $q\notin B_-(\xi)$.
    \item There exists an admissible flag $Y_\bullet$ with $Y_n=\{q\}$ such that the origin $0\in\Delta_{Y_\bullet}(\xi)\subset\RR^n$.
    \item For any admissible flag $Y_\bullet$ with $Y_n=\{q\}$, one has $0\in\Delta_{Y_\bullet}(\xi)\subset\RR^n$.
\end{enumerate}
\end{theorem}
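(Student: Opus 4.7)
I plan to reduce the theorem to establishing (1) $\Leftrightarrow$ (2) for a fixed admissible flag $Y_\bullet$ with $Y_n = \{q\}$: condition (1) is flag-independent, and (3) $\Rightarrow$ (2) is automatic. The opening observation is that the Okounkov valuation satisfies $\nu_{Y_\bullet}(s) = 0$ if and only if $s(q) \neq 0$, since every $Y_i$ contains $q$, so non-vanishing at $q$ forces non-vanishing along each $Y_i$, and conversely the iterated restriction defining $\nu_{Y_\bullet}$ captures precisely the vanishing order at $q$ after the successive quotients by local equations of the $Y_i$.

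\textbf{Direction (1) $\Rightarrow$ (2).} Given $q \notin B_-(\xi) = \bigcup_{A\text{ ample}} B(\xi+A)$, for every ample $\QQ$-divisor $A$ there exist $m$ and $s \in H^0(m(\xi+A))$ with $s(q) \neq 0$; hence $\nu_{Y_\bullet}(s) = 0$ and $0 \in \Delta_{Y_\bullet}(\xi+A)$. By the continuity of Okounkov bodies on the big cone \cite{LM}, $\Delta_{Y_\bullet}(\xi+A) \to \Delta_{Y_\bullet}(\xi)$ in Hausdorff distance as $A \to 0$, so the origin persists in the limit, giving (2).

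\textbf{Direction (2) $\Rightarrow$ (1), the main obstacle.} Given $0 \in \Delta_{Y_\bullet}(\xi)$, I must exhibit, for every ample $\QQ$-divisor $A$, a section of $m(\xi+A)$ not vanishing at $q$. A naive Minkowski-sum argument $\Delta_{Y_\bullet}(\xi) + \Delta_{Y_\bullet}(A) \subseteq \Delta_{Y_\bullet}(\xi+A)$ only places the origin inside $\Delta_{Y_\bullet}(\xi+A)$, which is insufficient: Okounkov body membership is an asymptotic closure condition, whereas the existence of a non-vanishing section requires an integer-valued valuation vector to be exactly zero. To bridge the gap I would invoke the \emph{infinitesimal flag} technique of K\"uronya--Lozovanu: blow up $q$ to obtain $\pi \colon \widetilde{X} \to X$ with exceptional divisor $E \cong \PP^{n-1}$, and use a flag $\widetilde{Y}_\bullet$ with $\widetilde{Y}_1 = E$ refined by a chain of linear subspaces of $E$. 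Since $H^0(\widetilde{X}, \pi^*D) \cong H^0(X, D)$ by the projection formula and $\ord_E(\pi^* s) = \mult_q(s)$, the first coordinate of $\nu_{\widetilde{Y}_\bullet}$ detects exactly the vanishing multiplicity of $s$ at $q$. This converts the origin-containment on $\widetilde{X}$ into a divisorial (codimension-one) condition on $E$, which interacts well with perturbation by an ample $A$ via Kodaira-type arguments and yields an honest section of $m(\xi+A)$ with $s(q) \neq 0$. Varying $A$, we conclude $q \notin B_-(\xi)$.

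\textbf{Where the work goes.} The crux is the translation between convex geometry (``the origin sits in a convex body'') and algebraic geometry (``an honest section does not vanish at $q$''), complicated by the lattice nature of valuations and the asymptotic nature of Okounkov bodies. The infinitesimal-flag reformulation collapses the problematic lattice issue into a single divisorial coordinate, where positivity behavior is much better understood. The remaining bookkeeping --- checking that the origin-condition transfers between the given flag $Y_\bullet$ and the infinitesimal flag $\widetilde{Y}_\bullet$, and exploiting the continuity of $\Delta_{Y_\bullet}(\xi+A)$ under perturbation --- is technical but standard once the infinitesimal-flag framework is set up.
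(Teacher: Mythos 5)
This statement is not proved in the paper at all: it is quoted verbatim from K\"uronya--Lozovanu \cite[Theorem A]{KL}, so the only meaningful comparison is with their argument. Measured against that, your direction (1) $\Rightarrow$ (2)/(3) is essentially correct and standard: a section with $s(q)\neq 0$ has valuation vector $0$ for any flag centered at $q$, hence $0\in\Delta_{Y_\bullet}(\xi+A)$ for every ample $A$ (with the minor caveat that $H^0(m(\xi+A))$ only makes sense after restricting to ample $A$ for which $\xi+A$ is a $\QQ$-divisor), and continuity of Okounkov bodies on the big cone together with closedness of $\Delta_{Y_\bullet}(\xi)$ passes the origin to the limit.

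The genuine gap is in (2) $\Rightarrow$ (1), and it is twofold. First, your reduction to the infinitesimal flag on the blow-up of $q$ presupposes that origin-membership transfers from the \emph{given, arbitrary} flag $Y_\bullet$ to the infinitesimal flag $\widetilde{Y}_\bullet$; but flag-independence of the origin condition is exactly the content of the equivalence (2) $\Leftrightarrow$ (3) (and of its infinitesimal analogue), so dismissing this transfer as ``technical but standard bookkeeping'' is circular --- you are assuming a statement of the same strength as the theorem you are proving. Second, even granting the infinitesimal reformulation, the sentence claiming that the divisorial condition ``interacts well with perturbation by an ample $A$ via Kodaira-type arguments and yields an honest section of $m(\xi+A)$ with $s(q)\neq0$'' is precisely the missing lemma, not an argument for it. The substantive step is: from $0\in\Delta_{Y_\bullet}(\xi)$ and $A$ ample one gets a small standard simplex $\lambda\Delta_0\subseteq\Delta_{Y_\bullet}(\xi)+\Delta_{Y_\bullet}(A)\subseteq\Delta_{Y_\bullet}(\xi+A)$, and one must then produce an \emph{actual} section of some $m(\xi+A)$ whose valuation vector is exactly $0$ (equivalently, which does not vanish at $q$). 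This cannot be extracted by a naive semigroup (Khovanskii-type) argument, because the lattice points $(0,\dots,0,m)$ lie on the boundary of the cone spanned by the graded valuation semigroup, not in its interior; overcoming this is where K\"uronya--Lozovanu do the real work. Since the paper uses both directions of the theorem in Proposition \ref{prop:Fujita-nef-thm-for-R-divisor}, and your sketch leaves (2) $\Rightarrow$ (1) unestablished, the proposal as it stands does not prove the statement.
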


Let us also record the following useful translation property of Newton--Okounkov bodies.
\begin{proposition}\cite[Proposition 1.6]{KL}.
\label{prop:translte-Delta}
Let $\xi$ be a big $\RR$-divisor and $Y_\bullet$ an admissible flag on $Y$. Then for any $t\in[0,\tau(\xi,Y_1))$ we have
$$
\Delta_{Y_\bullet}(\xi)_{\nu_1\geq t}=\Delta_{Y_\bullet}(\xi-tY_1)+te_1,
$$
where $\tau(\xi,Y_1):=\sup\{\mu>0|\xi-\mu Y_1\text{ is big}\}$ denotes the pseudo-effective threshold, $\nu_1$ denotes the first coordinate of $\RR^n$ and $e_1=(1,0,...,0)\in\RR^n$.
\end{proposition}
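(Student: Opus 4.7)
The plan is to prove the translation identity first in the rational setting, where it reduces to a direct comparison of section spaces via multiplication by the canonical section of $Y_1$, and then extend to arbitrary real classes and real $t$ using the continuity of Newton--Okounkov bodies on the big cone.

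First, assume $\xi$ is a big $\QQ$-divisor and $t\in\QQ_{\geq 0}$ with $t<\tau(\xi,Y_1)$, and choose $m$ sufficiently divisible so that both $m\xi$ and $m(\xi-tY_1)$ are integral Cartier divisors. Let $s_{Y_1}\in H^0(Y,\mathcal{O}_Y(Y_1))$ denote the tautological defining section of $Y_1$. Multiplication by $s_{Y_1}^{mt}$ realizes an inclusion
$$
H^0\bigl(Y,m(\xi-tY_1)\bigr)\hookrightarrow H^0(Y,m\xi),
$$
whose image is precisely the subspace of sections vanishing to order at least $mt$ along $Y_1$. Under the flag valuation $\nu_{Y_\bullet}$, which reads off $\nu_1$ as the order of vanishing along $Y_1$ and then iterates on the restriction to $Y_1$, this operation shifts $\nu_1$ by $+mt$ while leaving the remaining coordinates untouched.

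Second, pass to Okounkov bodies. Normalizing valuation images by $m$ and taking closures of the graded semigroups as $m\to\infty$, the section-level bijection from the previous step translates directly into the set-level identity
$$
\Delta_{Y_\bullet}(\xi)_{\nu_1\geq t}=\Delta_{Y_\bullet}(\xi-tY_1)+te_1.
$$
The strict inequality $t<\tau(\xi,Y_1)$ ensures that $\xi-tY_1$ is still big, so the right-hand side is a genuine nondegenerate convex body of full dimension and the closures on each side match up correctly.

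Third, extend to arbitrary real $\xi$ and real $t$ by appealing to the continuity of $\xi\mapsto\Delta_{Y_\bullet}(\xi)$ in Hausdorff distance on the big cone of $N^1(Y)_\RR$ established in \cite{LM}. Since bigness is an open condition, $\tau(\cdot,Y_1)$ is lower semicontinuous, so one may approximate $(\xi,t)$ by rational pairs $(\xi_k,t_k)\to(\xi,t)$ with $t_k<\tau(\xi_k,Y_1)$ for all $k$ large. Both sides of the rational identity then vary continuously in Hausdorff distance, and the identity persists in the limit.

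The main obstacle is the third step. The rational case is essentially formal once the definitions are unpacked, but one must verify that the slicing operation $\Delta\mapsto\Delta_{\nu_1\geq t}$ commutes with Hausdorff limits along the family $\Delta_{Y_\bullet}(\xi_k)$, and this is where the strict inequality $t<\tau(\xi,Y_1)$ becomes essential: it keeps the cross-section at height $t$ in the interior of the vertical extent of $\Delta_{Y_\bullet}(\xi)$, preventing the slice from collapsing onto a lower-dimensional face as one passes to the limit.
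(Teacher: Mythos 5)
Your overall plan (rational case via multiplication by the defining section of $Y_1$, then a continuity argument) is the natural one; note that the paper itself offers no proof here, quoting the statement from [KL, Proposition~1.6]. The real problem is in your second step. What the section-level bijection gives you directly is that the closed convex hull of the normalized valuation vectors $\frac{1}{m}\nu_{Y_\bullet}(s)$ with $\nu_1(s)\geq mt$ equals $te_1+\Delta_{Y_\bullet}(\xi-tY_1)$, and hence only the inclusion $te_1+\Delta_{Y_\bullet}(\xi-tY_1)\subseteq \Delta_{Y_\bullet}(\xi)_{\nu_1\geq t}$. The reverse inclusion is not formal: $\Delta_{Y_\bullet}(\xi)_{\nu_1\geq t}$ is obtained by first taking the closed convex hull of \emph{all} normalized valuation vectors and then slicing by the half-space, and a point of that slice is a limit of convex combinations whose individual valuation vectors may well have first coordinate $<t$; ``hull then slice'' can a priori be strictly larger than ``slice then hull''. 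Closing this gap needs a genuine input, for instance the Khovanskii-type density statement underlying [LM, Section~2] (the normalized valuation vectors of a big class are dense in the interior of its Newton--Okounkov body), after which every interior point of $\Delta_{Y_\bullet}(\xi)$ with $\nu_1>t$ is a limit of valuation points with $\nu_1>t$, each of which factors through a power of $s_{Y_1}$ and so lies in $te_1+\Delta_{Y_\bullet}(\xi-tY_1)$; alternatively one can compare volumes of the two nested convex bodies using $\vol_{\RR^n}(\Delta_{Y_\bullet}(\cdot))=\frac{1}{n!}\vol(\cdot)$. As written, ``translates directly into the set-level identity'' skips precisely the content that makes this a proposition rather than a tautology.

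Your third step is also incomplete, as you partly acknowledge. The claim that $t<\tau(\xi,Y_1)$ keeps the slice ``in the interior of the vertical extent'' implicitly uses that $\sup\{\nu_1(x):x\in\Delta_{Y_\bullet}(\xi)\}=\tau(\xi,Y_1)$, which is itself a statement requiring proof (and is close in content to the proposition); moreover the hyperplane $\{\nu_1=t\}$ may fail to meet the interior of $\Delta_{Y_\bullet}(\xi)$ on the other side, e.g.\ when all sections vanish along $Y_1$ to order greater than $mt$ so that the whole body lies in the half-space, and in such degenerate situations Hausdorff continuity of $K\mapsto K\cap\{\nu_1\geq t\}$ can fail and must be handled separately. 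A cleaner way to finish is to prove the two inclusions for real $(\xi,t)$ separately, by monotone rational approximation of $\xi$ and $t$ from appropriate sides, using the continuity of $\Delta_{Y_\bullet}$ on the big cone coming from the global Okounkov body of [LM], rather than pushing the two-sided equality through a Hausdorff limit of sliced bodies.
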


\section{Rational classes}
\label{sec:rational-class}

In this section, we will verify Theorem \ref{thm:vol-kahler} for rational classes. More precisely, we prove the following

\begin{theorem}
\label{thm:main-rational}
Let $L$ be an ample $\QQ$-line bundle on a Fano manifold $X$. Then one has
$$
\beta(X,L)^n\vol(L)\leq(n+1)^n,
$$
with equality if and only if $X$ is biholomorphic to $\PP^n$.
\end{theorem}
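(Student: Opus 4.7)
Following Fujita's strategy in \cite{F}, my plan is to prove the slightly stronger inequality $\delta(X,L)^n\vol(L)\leq(n+1)^n$; the bound in Theorem \ref{thm:main-rational} then follows immediately from $\beta(X,L)\leq\delta(X,L)$, which is part of Theorem \ref{thm:beta=delta-general}. The crux is to test $\delta(X,L)$ against the exceptional divisor of a point blow-up and to bound the corresponding $S$-invariant from below by $\frac{n}{n+1}\vol(L)^{1/n}$.

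\textbf{Main estimate.} Fix any point $p\in X$ and let $\pi\colon Y\to X$ denote the blow-up at $p$ with exceptional divisor $E$. From $K_Y=\pi^*K_X+(n-1)E$ one reads off $A_X(E)=n$, so definition \eqref{eq:def-delta-L} gives $\delta(X,L)\leq n/S_L(E)$. On $Y$ the intersection identity
$$(\pi^*L-xE)^n=\vol(L)-x^n$$
holds for every $x\in\RR$, since all mixed numbers $(\pi^*L)^k\cdot E^{n-k}$ vanish except $(\pi^*L)^n=\vol(L)$ and $E^n=(-1)^{n-1}$. The key input is the standard inequality $\vol(D)\geq D^n$ valid for every pseudoeffective $\RR$-divisor $D$ on a smooth projective variety (a consequence of divisorial Zariski decomposition). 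Applied to $D=\pi^*L-xE$ on $[0,\tau]$ with $\tau:=\tau(\pi^*L,E)$, it yields $\vol(\pi^*L-xE)\geq \vol(L)-x^n$. Specializing at $x=\tau$, the volume must vanish on the pseudoeffective boundary, forcing $\vol(L)-\tau^n\leq 0$, i.e.\ $\tau\geq \vol(L)^{1/n}$. Integrating the bound over $[0,\vol(L)^{1/n}]\subset[0,\tau]$ then yields
$$S_L(E)=\frac{1}{\vol(L)}\int_0^\tau\vol(\pi^*L-xE)\,dx\geq \frac{1}{\vol(L)}\int_0^{\vol(L)^{1/n}}(\vol(L)-x^n)\,dx=\frac{n\,\vol(L)^{1/n}}{n+1}.$$
Feeding this into $\delta(X,L)\leq n/S_L(E)$ produces $\delta(X,L)\cdot\vol(L)^{1/n}\leq n+1$, which is the desired inequality.

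\textbf{Equality case and main obstacle.} Suppose $\beta(X,L)^n\vol(L)=(n+1)^n$. Since $\beta\leq\delta$ this forces $\delta(X,L)^n\vol(L)=(n+1)^n$ as well, and every inequality in the chain above must be an equality. Tracking them through, for every $p\in X$ and its associated exceptional divisor $E$ we must have $\tau(\pi^*L,E)=\vol(L)^{1/n}$ together with $\vol(\pi^*L-xE)=\vol(L)-x^n$ on all of $[0,\vol(L)^{1/n}]$. The equality $\vol=D^n$ rules out any divisorial negative part along $E$, so $\pi^*L-xE$ must be nef throughout this interval---equivalently, the Seshadri constant of $L$ at every point of $X$ attains the maximum value $\vol(L)^{1/n}$ allowed by the standard Seshadri bound. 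Deducing $X\cong\PP^n$ from this point-wise maximality is the genuinely delicate step and is the main obstacle here; my plan is to study the nef, non-big $\RR$-divisor $\pi^*L-\vol(L)^{1/n}E$ on $Y$ and the contraction it defines, aiming to recover the projective space structure on $X$ in the spirit of Fujita's equality argument.
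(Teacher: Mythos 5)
Your inequality part follows the paper's overall strategy (test $\delta$ against the exceptional divisor of a point blow-up and bound $S_L(E)$ from below), but the justification of the key estimate is wrong. The claimed "standard inequality" $\vol(D)\geq D^n$ for every pseudoeffective $\RR$-divisor $D$ is false in dimension $\geq 3$: on the blow-up $\sigma:Y\to\PP^3$ of a point, with $H=\sigma^*\mathcal{O}(1)$ and exceptional divisor $E$, the big divisor $D=H+aE$ (for $a>0$) has $\vol(D)=1$ while $D^3=1+a^3>\vol(D)$. The statement is a surface phenomenon (via Zariski decomposition); divisorial Zariski decomposition in higher dimension does not give it, since the positive part is only movable and its volume need not dominate the top self-intersection. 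The special inequality you actually need, $\vol(\sigma^*L-xE)\geq\vol(L)-x^n$, is true, but it requires a different argument: counting sections, using that vanishing to order $\lceil mx\rceil$ at a point imposes at most $\binom{\lceil mx\rceil+n-1}{n}\sim (mx)^n/n!$ conditions. This is exactly \cite[Theorem 2.3(1)]{F}, which the paper cites; so this part of your argument is repairable, but not as written.

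The equality case contains the more serious gaps. First, your inference that the volume identity "rules out any divisorial negative part along $E$, so $\pi^*L-xE$ must be nef" is not valid: in dimension $\geq 3$ a big class with trivial divisorial negative part (i.e.\ movable) can still fail to be nef, with non-nef locus of codimension $\geq 2$. The passage from $\vol(\sigma^*L-xE)=\vol(L)-x^n$ on the whole interval to nefness of $\sigma^*L-xE$ is precisely the nontrivial content of \cite[Theorem 2.3(2)]{F}, whose proof uses the higher-cohomology ampleness criterion of \cite{dFKL} and the rationality of $L$; the paper simply invokes it (and develops the Newton--Okounkov argument of Proposition \ref{prop:Fujita-nef-thm-for-R-divisor} only for the later irrational case). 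Second, even granting nefness, your conclusion stops at a "plan": maximality of the Seshadri constants of $L$ by itself is not what yields $\PP^n$. The paper's key step, which you omit, is to use the other half of Theorem \ref{thm:beta=delta-general}, namely $\epsilon(X,L)\geq\beta(X,L)$, so that $-K_X-\beta(X,L)L$ is nef; combined with nefness of $\sigma^*L-\frac{n+1}{\beta(X,L)}E$ this makes $\sigma^*(-K_X)-(n+1)E$ nef for every point $p$, and then the characterization of $\PP^n$ by anticanonical Seshadri constants \cite{CMSB,K02} applies. Without converting the statement about $L$ into one about $-K_X$, your proposed study of the contraction defined by $\pi^*L-\vol(L)^{1/n}E$ has no clear route to the conclusion, so the equality case remains unproved.
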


\begin{proof}
We follow the argument of K. Fujita \cite{F}.
Firstly, Theorem \ref{thm:beta=delta-general} implies that
$$
\delta(X,L)\geq\beta(X,L)\text{ and }\epsilon(X,L)\geq\beta(X,L).
$$
Pick any point $p\in X$ and let $\hat{X}\xrightarrow{\sigma}X$ be the blow-up at $p$. Let $E$ be the exceptional divisor of $\sigma$.
Then one has
$$
A_X(E)\geq \beta(X,L)S_L(E),
$$
and hence,
\begin{equation*}
    \begin{aligned}
    n=A_X(E)&\geq \beta(X,L)S_L(E)\\
    &=\frac{\beta(X,L)}{\vol(L)}\int_0^\infty\vol(\sigma^*L-xE)dx\\
    &\geq\frac{\beta(X,L)}{\vol(L)}\int_0^{\sqrt[n]{\vol(L)}}(\vol(L)-x^n)dx\\
    &=\frac{n\beta(X,L)}{n+1}\sqrt[n]{\vol(L)}.\\
    \end{aligned}
\end{equation*}
Here we used \cite[Theorem 2.3(1)]{F}. Thus
$$\vol(L)\leq\frac{(n+1)^n}{\beta(X,L)^n},$$
so the desired inequality is established.  
Now suppose that $\vol(L)=\frac{(n+1)^n}{\beta(X,L)^n}$. Then we see that the equality
$$
\vol(\sigma^*L-xE)=\vol(L)-x^n
$$
has to hold true for any $x\in[0,\frac{n+1}{\beta(X,L)}]$ (as $\vol(\sigma^*L-xE)$ is a continuous function in $x$). So \cite[Theorem 2.3(2)]{F} implies that
$$
\sigma^*L-\frac{n+1}{\beta(X,L)}E \text{ is nef.}
$$
Now using $\epsilon(X,L)\geq\beta(X,L)$, we find that
$$
\sigma^*(-K_X)-(n+1)E=\sigma^*\bigg(-K_X-\beta(X,L)L\bigg)+\beta(X,L)\bigg(\sigma^*L-\frac{n+1}{\beta(X,L)}E\bigg)
$$
is nef as well. Since $p\in X$ can be chosen arbitrarily, we conclude that $X\cong\PP^n$ by \cite{CMSB,K02}.
\end{proof}

In the above proof, we actually obtained the following general inequality (cf. Blum--Jonsson \cite[Theorem D]{BJ17})
\begin{equation}
    \label{eq:BJ17-thm-D}
    \delta(X,L)^n\vol(L)\leq (n+1)^n.
\end{equation}
This inequality reveals the deep relationship between \emph{singularities} and \emph{volumes} of linear systems. 

\begin{remark}
In the toric case when $L=-K_X$, the inequality
in Theorem \ref{thm:main-rational}
was first obtained by Berman--Berndtsson \cite{BB} using analytic methods, and the equality case was characterized by F. Wang \cite{W}.
\end{remark}

\section{General K\"ahler classes}
\label{sec:Kahler}

Let us attend to general K\"ahler classes.
The main result of this section is the following.

\begin{theorem}
\label{thm:main-general}
Let $\xi$ be a K\"ahler class of a Fano manifold $X$. Then one has
$$
\beta(X,\xi)^n\vol(\xi)\leq(n+1)^n,
$$
with equality if and only if $X$ is biholomorphic to $\PP^n$.
\end{theorem}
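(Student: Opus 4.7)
The plan is to follow the proof of Theorem \ref{thm:main-rational} almost verbatim, handling the irrationality of $\xi$ at two places: the algebraic input $\beta=\min\{\epsilon,\delta\}$ from Theorem \ref{thm:beta=delta-general}, and the extraction of nefness from the equality case of Fujita's volume inequality.

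For the inequality $\beta(X,\xi)^n\vol(\xi)\leq (n+1)^n$, I would approximate $\xi$ from inside the K\"ahler cone by a sequence of rational ample classes $\xi_k\to\xi$; since $\mathcal{K}(X)$ is open, such a sequence exists. Theorem \ref{thm:main-rational} yields $\beta(X,\xi_k)^n\vol(\xi_k)\leq (n+1)^n$. Because $\vol(\cdot)$ is continuous on $N^1(X)_\RR$, passing to the limit reduces the claim to the lower semi-continuity $\liminf_k\beta(X,\xi_k)\geq\beta(X,\xi)$, which I would verify from the definition \eqref{eq:def-beta-xi}: given any $\mu<\beta(X,\xi)$, pick $\omega\in 2\pi\xi$ with $\Ric(\omega)\geq\mu\omega$, and perturb $\omega$ by a small $dd^c$-correction to obtain a K\"ahler form in $2\pi\xi_k$ whose Ricci form dominates a positive multiple of itself, with the multiple tending to $\mu$ as $k\to\infty$. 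A perhaps cleaner alternative is to extend $\delta(X,\cdot)$ and the expected vanishing order $S_\xi(E)$ to all real ample classes using continuity of volume, establish $\delta(X,\xi)\geq\beta(X,\xi)$ by approximation, and then rerun the integral estimate of Theorem \ref{thm:main-rational} verbatim.

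For the equality case, assume $\beta(X,\xi)^n\vol(\xi)=(n+1)^n$. Fix an arbitrary point $p\in X$, let $\sigma\colon\hat{X}\to X$ be the blow-up at $p$, and let $E$ be the exceptional divisor. Equality forces
\[
\vol(\sigma^*\xi-xE)=\vol(\xi)-x^n\qquad\text{for all }x\in[0,(n+1)/\beta(X,\xi)].
\]
In the rational case Fujita's \cite[Theorem 2.3(2)]{F} converts this into nefness of $\sigma^*\xi-\tfrac{n+1}{\beta(X,\xi)}E$, but here I must argue nefness of this irrational $\RR$-divisor using Theorem \ref{thm:nef-criterion}. For each $q\in\hat{X}$, I would select an admissible flag $Y_\bullet$ with $Y_n=\{q\}$, combine the slicing description of Newton--Okounkov bodies (Proposition \ref{prop:translte-Delta}) with the rigid shape $\vol(\sigma^*\xi - xE)=\vol(\xi)-x^n$, and deduce that $0\in\Delta_{Y_\bullet}(\sigma^*\xi-\tfrac{n+1}{\beta(X,\xi)}E)$. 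Theorem \ref{thm:nef-criterion} then gives $q\notin B_-(\sigma^*\xi-\tfrac{n+1}{\beta(X,\xi)}E)$, so this divisor has empty restricted base locus and is nef. The identity
\[
\sigma^*(-K_X)-(n+1)E=\sigma^*\bigl(-K_X-\beta(X,\xi)\xi\bigr)+\beta(X,\xi)\Bigl(\sigma^*\xi-\tfrac{n+1}{\beta(X,\xi)}E\Bigr)
\]
then displays $\sigma^*(-K_X)-(n+1)E$ as a sum of nef classes (the first summand via $\epsilon(X,\xi)\geq\beta(X,\xi)$), hence nef. Since $p$ is arbitrary, \cite{CMSB,K02} yields $X\cong\PP^n$.

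The hard part will be the equality case, specifically the step that converts the volume identity into nefness of the irrational $\RR$-divisor $\sigma^*\xi-\tfrac{n+1}{\beta(X,\xi)}E$. This is precisely the role of Newton--Okounkov bodies: one must carefully track the shape of $\Delta_{Y_\bullet}(\sigma^*\xi-xE)$ as $x$ varies, invoke the translation identity across the pseudo-effective threshold $\tau$, and verify that the origin lies in the body at $x=(n+1)/\beta(X,\xi)$ for every admissible flag through every point of $\hat{X}$.
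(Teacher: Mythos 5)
Your overall route coincides with the paper's: approximation plus lower semi-continuity of $\beta(X,\cdot)$ for the inequality, and Newton--Okounkov bodies with the K\"uronya--Lozovanu criterion (Theorem \ref{thm:nef-criterion}) for the equality case. The inequality half is fine, and so is the reduction of the equality case to the volume identity $\vol(\sigma^*\xi-xE)=\vol(\xi)-x^n$ on $[0,(n+1)/\beta(X,\xi)]$, provided you note that passing from the integral estimate to this pointwise identity also requires the $\RR$-divisor version of Fujita's lower bound $\vol(\sigma^*\xi-xE)\geq\vol(\xi)-x^n$, obtained by approximation (this is Lemma \ref{lem:Fujita-ineq-Kahler-class}(1) in the paper, a routine point).

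The genuine gap is exactly at the step you flag as ``the hard part'' and then leave unproved: you assert that combining Proposition \ref{prop:translte-Delta} with the rigid volume profile yields $0\in\Delta_{Y_\bullet}(\sigma^*\xi-\tfrac{n+1}{\beta(X,\xi)}E)$, but the volume identity only determines the \emph{areas} of the slices $S_r=\Delta_{Y_\bullet}(\sigma^*\xi)\cap\{\nu_1=r\}$, namely $A(r)=r^{n-1}/(n-1)!$; it says nothing about \emph{where} those slices sit in $\RR^{n-1}$, so it does not by itself place the origin, or the segment along the $\nu_1$-axis, inside the body. The paper closes this with three further ingredients: (i) the argument is run by contradiction at a single point $q\in B_-(\sigma^*\xi-x_0E)\cap E$ (such $q$ exists because any curve meeting $\sigma^*\xi-x_0E$ negatively must meet $E$), with the flag chosen so that $Y_1=E$ and $Y_n=\{q\}$ --- this choice is what makes the translation statement of Proposition \ref{prop:translte-Delta} applicable to $\sigma^*\xi-xE$; (ii) the equality case of the Brunn--Minkowski inequality: since $A(r)^{1/(n-1)}$ is linear, all slices are homothetic and $\Delta_{Y_\bullet}(\sigma^*\xi)\cap\{0\le\nu_1\le a\}$ is a cone over the slice at $\nu_1=a$; (iii) the positivity of the local Seshadri constant $\epsilon_p>0$, which makes $\sigma^*\xi-xE$ nef for $x\in[0,\epsilon_p]$ and hence, via Theorem \ref{thm:nef-criterion} and the translation property, puts the short segment $\{(x,0,\dots,0):x\in[0,\epsilon_p]\}$ inside the body; the cone structure then propagates this to the whole segment up to $a$, and translating back gives $0\in\Delta_{Y_\bullet}(\sigma^*\xi-x_0E)$, contradicting $q\in B_-$. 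Without (ii) and (iii), or some substitute, the conclusion does not follow from the slice volumes alone, so your equality case is incomplete as written; moreover your plan to verify the criterion at \emph{every} $q\in\hat X$ with an arbitrary flag is both more than needed and harder to control than the paper's single-point contradiction with $Y_1=E$.
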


\begin{remark}
Using Bishop--Gromov, one can quickly derive
\begin{equation*}
\label{eq:beta-V-bd}
\beta(X,\xi)^n\vol(\xi)\leq\frac{2^{n+1}(n!)^2(2n-1)^n}{(2n)!}.
\end{equation*}
However this bound is much worse than $(n+1)^n$ (especially when $n$ is large).
\end{remark}

The proof of Theorem \ref{thm:main-general} will be divided into several steps. We first show the inequality by approximation and then characterize the equality using Newton--Okounkov bodies. We begin with a simple observation.

\begin{lemma}
\label{lem:lsc-beta}
The greatest Ricci lower bound $\beta(X,\cdot)$ is a lower semi-continuous
\footnote{In fact it is proved in the author's recent work \cite{Z20} that $\beta(X,\cdot)$ is even continuous.}
function on $\mathcal{K}(X)$.
\end{lemma}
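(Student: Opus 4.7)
The plan is to prove lower semi-continuity by a direct perturbation argument that requires no PDE input. Fix $\xi_0\in\mathcal{K}(X)$ and any $0<\mu<\mu'<\beta(X,\xi_0)$. By the definition \eqref{eq:def-beta-xi}, there exists $\omega_0\in 2\pi\xi_0$ with $\Ric(\omega_0)\geq\mu'\omega_0$, hence
\beqno
\Ric(\omega_0)-\mu\omega_0 \;\geq\; (\mu'-\mu)\omega_0.
\eeqno
The goal is to show that this strict margin propagates to a neighborhood of $\xi_0$ in $\mathcal{K}(X)$, which will immediately give $\beta(X,\xi)\geq\mu$ for all such $\xi$.

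For $\xi\in\mathcal{K}(X)$ near $\xi_0$, I would represent the small cohomology class $2\pi(\xi-\xi_0)\in H^{1,1}(X,\RR)$ by its unique $\omega_0$-harmonic $(1,1)$-form $\eta$. Since the harmonic projection is a linear isomorphism from cohomology onto the finite-dimensional space of harmonic $(1,1)$-forms, all $C^k$-norms of $\eta$ are dominated by $|\xi-\xi_0|$; in particular $\eta\to 0$ in $C^\infty$ as $\xi\to\xi_0$. Setting $\omega_\xi:=\omega_0+\eta\in 2\pi\xi$, which is K\"ahler once $\eta$ is sufficiently small in $C^0$, the standard transformation law for the Ricci form yields
\beqno
\Ric(\omega_\xi)-\mu\omega_\xi \;=\; \bigl(\Ric(\omega_0)-\mu\omega_0\bigr) \;-\; \mu\eta \;-\; \sqrt{-1}\partial\bar\partial\log\frac{\omega_\xi^n}{\omega_0^n}.
\eeqno

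The first summand is bounded below by $(\mu'-\mu)\omega_0$ by the previous display, whereas both $\mu\eta$ and $\sqrt{-1}\partial\bar\partial\log(\omega_\xi^n/\omega_0^n)$ tend to zero in $C^0$ as $\eta\to 0$ in $C^\infty$ (the ratio $\omega_\xi^n/\omega_0^n$ is smoothly close to $1$). Hence for $\xi$ in a sufficiently small neighborhood of $\xi_0$ the right-hand side is nonnegative, so $\Ric(\omega_\xi)\geq\mu\omega_\xi$ and $\beta(X,\xi)\geq\mu$. Letting $\mu\nearrow\beta(X,\xi_0)$ gives $\liminf_{\xi\to\xi_0}\beta(X,\xi)\geq\beta(X,\xi_0)$, which is the desired lower semi-continuity. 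The only technicality is the smooth dependence of $\eta$ on the cohomology class, which is automatic from the finite-dimensionality of $H^{1,1}(X,\RR)$; no PDE is solved, which is precisely why lower semi-continuity is much softer than the full continuity established later in \cite{Z20}.
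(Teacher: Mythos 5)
Your argument is correct and is essentially the paper's own proof: both perturb a fixed metric $\omega_0$ with a strict Ricci margin by small smooth representatives of the change of class (the paper uses fixed representatives of a basis of $H^{1,1}(X,\RR)$, you use $\omega_0$-harmonic representatives, which play the same role) and observe that the bound $\Ric\geq\mu\,\omega$ persists. The only difference is that you spell out the step the paper leaves implicit, namely the Ricci transformation law and the $C^0$-smallness of $\mu\eta$ and $\sqrt{-1}\partial\bar\partial\log(\omega_\xi^n/\omega_0^n)$, which is a welcome clarification but not a new route.
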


\begin{proof}
Let $\{e_1,...,e_\rho\}$ be a basis of $H^{1,1}(X,\RR)$, then any K\"ahler class $\xi\in\mathcal{K}(X)$ can be
written as
$$
\xi=\sum_{i=1}^\rho a_ie_i
$$
for some $a_i\in\RR$.
For each $i\in\{1,...,\rho\}$ choose a smooth real $(1,1)$-form $\eta_i\in e_i$.  

Now assume that there exists $\o\in2\pi\xi$ such that
$$
\Ric(\o)>\mu\o
$$
for some $\mu>0$. For any $\vec{\epsilon}=(\epsilon_1,...,\epsilon_\rho)\in\RR^{^\rho}$ with $||\vec{\epsilon}||\ll1$, we put
$$
\o_{\vec{\epsilon}}:=\o+\sum_{i=1}^\rho\epsilon_i\eta_i.
$$
Then for $||\vec{\epsilon}||\ll1$ one also has
$$
\Ric(\o_{\vec{\epsilon}})>\mu\o_{\vec{\epsilon}}.
$$
So the lower semi-continuity of $\beta(X,\cdot)$ follows.

\end{proof}

As a consequence we get the following volume upper bound for general K\"ahler classes in terms of its greatest Ricci lower bound.

\begin{proposition}
\label{prop:beta-V-bd-for-xi}
Let $\xi$ be a K\"ahler class on an $n$-dimensional Fano manifold $X$. Then one has
\begin{equation}
    \label{eq:beta-omega-bd}
    \beta(X,\xi)^n\vol(\xi)\leq (n+1)^n.
\end{equation}
    
\end{proposition}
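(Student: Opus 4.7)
The plan is to prove the inequality by approximating the (possibly irrational) Kähler class $\xi$ by rational Kähler classes and invoking the already established Theorem~\ref{thm:main-rational}. The key inputs are: (i) the lower semi-continuity of $\beta(X,\cdot)$ on $\mathcal{K}(X)$ supplied by Lemma~\ref{lem:lsc-beta}; (ii) the continuity of the volume function $\vol(\cdot)$ on $N^1(X)_\RR$ recalled in Section~\ref{sec:pre}; and (iii) the density of rational classes in $\mathcal{K}(X)$, which follows from $H^{1,1}(X,\RR)=H^2(X,\RR)$ together with the openness of the Kähler cone.

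First I would fix any $\mu'$ with $0<\mu'<\beta(X,\xi)$. By Lemma~\ref{lem:lsc-beta} there is an open neighborhood $U\subset\mathcal{K}(X)$ of $\xi$ on which $\beta(X,\xi')>\mu'$. Next I would pick a sequence of rational Kähler classes $\xi_k\in U$ with $\xi_k\to\xi$; each $\xi_k$ is of the form $c_1(L_k)$ for an ample $\QQ$-line bundle $L_k$ (after clearing denominators, if needed, and rescaling the bound accordingly by homogeneity of $\beta^n\vol$). Applying Theorem~\ref{thm:main-rational} to each $L_k$ yields
$$
(\mu')^n\,\vol(L_k)\le \beta(X,L_k)^n\,\vol(L_k)\le (n+1)^n.
$$
Letting $k\to\infty$ and using continuity of $\vol(\cdot)$ gives $(\mu')^n\vol(\xi)\le (n+1)^n$, and then letting $\mu'\nearrow \beta(X,\xi)$ produces the desired bound.

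The main conceptual point, and the only place where one must be careful, is the direction of semi-continuity: since we have a \emph{lower} bound on $\beta$ in a neighborhood rather than an upper bound, we cannot naively pass to a limsup in $\beta(X,L_k)$. Replacing $\beta(X,\xi)$ by the strictly smaller $\mu'$ and absorbing the loss at the end is what makes the LSC information useful for deriving an upper bound on $\beta(X,\xi)^n\vol(\xi)$. Beyond this, everything else is routine: the rational classes inside $U$ exist by openness of $\mathcal{K}(X)$ and density of $N^1(X)_\QQ$ in $N^1(X)_\RR$, and the scaling invariance $\beta(X,\lambda\xi)=\beta(X,\xi)/\lambda$, $\vol(\lambda\xi)=\lambda^n\vol(\xi)$ ensures that clearing denominators to pass from rational classes to $\QQ$-line bundles does not affect the inequality. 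The equality case is deferred, and will require the Newton--Okounkov body machinery and Theorem~\ref{thm:nef-criterion} rather than this simple approximation argument.
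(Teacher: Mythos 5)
Your proposal is correct and follows essentially the same route as the paper: approximate $\xi$ by rational classes (ample $\QQ$-line bundles), use the lower semi-continuity of $\beta(X,\cdot)$ from Lemma~\ref{lem:lsc-beta} to keep $\beta(X,L_k)$ above a level slightly below $\beta(X,\xi)$, apply Theorem~\ref{thm:main-rational}, and conclude by continuity of $\vol(\cdot)$. Your fixed threshold $\mu'<\beta(X,\xi)$ plays exactly the role of the paper's $\beta(X,\xi)-\epsilon$, so the two arguments are the same up to notation.
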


\begin{proof}
Choose a sequence of ample $\QQ$-line bundles $L_i$ such that
$$
L_i\rightarrow\xi \text{ in }N^1(X)_\RR.
$$
By Lemma \ref{lem:lsc-beta} we have
$$
\beta(X,\xi)\leq\liminf_i\beta(X,L_i).
$$
So for any $\epsilon>0$ and $i\gg1$, one has
$$
\beta(X,L_i)\geq \beta(X,\xi)-\epsilon.
$$
Thus Theorem \ref{thm:main-rational} implies that
$$
(\beta(X,\xi)-\epsilon)^n\vol(L_i)\leq(n+1)^n.
$$
Using the continuity $\vol(L_i)\rightarrow\vol(\xi)$ and sending $\epsilon\rightarrow0$, we get
$$
\beta(X,\xi)^n\vol(\xi)\leq (n+1)^n.
$$
\end{proof}

Therefore, to finish the proof of Theorem \ref{thm:main-general}, it remains to show that the equality of \eqref{eq:beta-omega-bd} is exactly obtained by $\PP^n$. Let us prepare the following lemma.

\begin{lemma}
\label{lem:Fujita-ineq-Kahler-class}
Let $X$ be a projective manifold. Pick any point $p\in X$ and let $\hat{X}\xrightarrow{\sigma}X$ be the blow-up at $p$. Let $E$ be the exceptional divisor of $\sigma$. Let $\xi\in N^1(X)_\RR$ be a nef and big $\RR$-line bundle. 
\begin{enumerate}
    \item For any $x\in\RR_{\geq0}$, one has
$$
\vol(\sigma^*\xi-xE)\geq\vol(\xi)-x^n.
$$
\item Suppose in addition that $X$ is Fano and that $\xi$ is ample satisfying
$$
\beta(X,\xi)^n\vol(\xi)=(n+1)^n,
$$
then for any $x\in[0,\vol(\xi)^{1/n}]$,
$$
\vol(\sigma^*\xi-xE)=\vol(\xi)-x^n.
$$
\end{enumerate}
\end{lemma}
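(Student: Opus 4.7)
The plan is to reduce both parts to Fujita's inequality for rational classes via approximation by ample $\QQ$-divisors, using Part (1) and a calibrated limiting argument in Part (2).

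For Part (1), I would approximate the nef big class $\xi$ by a sequence of ample $\QQ$-line bundles $L_i \to \xi$ in $N^1(X)_\RR$ (possible since $\xi$ lies in the closure of the ample cone). Fujita's \cite[Theorem 2.3(1)]{F} gives $\vol(\sigma^*L_i - xE) \geq \vol(L_i) - x^n$ for each $i$, and passing to the limit using the continuity of $\vol$ on both $N^1(X)_\RR$ and $N^1(\hat X)_\RR$ yields the assertion for $\xi$.

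For Part (2), take the same sequence $L_i \to \xi$. The key first observation is that the extremality hypothesis $\beta(X,\xi)^n \vol(\xi) = (n+1)^n$ forces $\beta(X, L_i) \to \beta(X, \xi)$: Lemma \ref{lem:lsc-beta} gives $\beta(X, \xi) \leq \liminf_i \beta(X, L_i)$, while Proposition \ref{prop:beta-V-bd-for-xi} yields $\beta(X, L_i) \leq (n+1)/\vol(L_i)^{1/n} \to (n+1)/\vol(\xi)^{1/n} = \beta(X, \xi)$. Copying the central display of the proof of Theorem \ref{thm:main-rational},
\begin{equation*}
n = A_X(E) \geq \beta(X, L_i)\, S_{L_i}(E) = \frac{\beta(X, L_i)}{\vol(L_i)} \int_0^\infty \vol(\sigma^*L_i - xE)\, dx.
\end{equation*}
The integrands are uniformly bounded (by $\sup_i \vol(L_i) < \infty$) and vanish for $x$ beyond a uniform constant (since the pseudo-effective thresholds of $\sigma^*L_i$ along $E$ converge to that of $\sigma^*\xi$), so dominated convergence combined with the continuity of $\vol$ gives $S_{L_i}(E) \to \frac{1}{\vol(\xi)}\int_0^\infty \vol(\sigma^*\xi - xE)\, dx =: S_\xi(E)$. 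Passing to the limit in the display produces
\begin{equation*}
S_\xi(E) \leq \frac{n}{\beta(X, \xi)} = \frac{n}{n+1}\,\vol(\xi)^{1/n}.
\end{equation*}
On the other hand, Part (1) together with a direct integration produce the matching lower bound $S_\xi(E) \geq \tfrac{n}{n+1}\vol(\xi)^{1/n}$. Equality throughout, combined with the pointwise inequality of Part (1) and the continuity of $x \mapsto \vol(\sigma^*\xi - xE)$, forces the asserted pointwise identity on $[0, \vol(\xi)^{1/n}]$.

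The main obstacle is the step $\beta(X, L_i) \to \beta(X, \xi)$: at this stage of the paper only lower semi-continuity of $\beta$ is available, so the matching upper bound must be extracted from Proposition \ref{prop:beta-V-bd-for-xi} together with the extremality hypothesis. Once this convergence is secured, the remainder is a standard dominated-convergence plus sandwich-inequality argument comparing the two bounds on $S_\xi(E)$.
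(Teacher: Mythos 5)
Your proposal is correct and follows essentially the same route as the paper: approximate by ample $\QQ$-line bundles, apply Fujita's rational-class results together with Theorem \ref{thm:beta=delta-general}, pass to the limit by dominated convergence, and close with the sandwich argument comparing the two bounds on $\int_0^\infty\vol(\sigma^*\xi-xE)\,dx$. The only difference is that you prove full convergence $\beta(X,L_i)\to\beta(X,\xi)$ (via Theorem \ref{thm:main-rational} and extremality), whereas the paper rescales so that $\beta(X,\xi)=1$ and only needs the lower semicontinuity bound $\beta(X,L_i)\geq 1-\epsilon$; your extra step is valid and not circular, just unnecessary.
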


\begin{proof}
This first part follows from \cite[Theorem 2.3(1)]{F} by approximation. Indeed, let $L_i$ be a sequence of ample $\QQ$-line bundles such that
$
L_i\rightarrow\xi
$ in $N^1(X)_\RR$.
Then for any $x\in\RR_{\geq0}$, \cite[Theorem 2.3(1)]{F} says that
$$
\vol(\sigma^*(L_i)-xE)\geq\vol(L_i)-x^n,
$$
so the assertion follows by the continuity of $\vol(\cdot)$.

For the second part, we rescale $\xi$ such that
$$
\beta(X,\xi)=1 \text{ and }\vol(\xi)=(n+1)^n.
$$
Let $L_i$ be a sequence of ample $\QQ$-line bundles such that
$
L_i\rightarrow\xi
$ in $N^1(X)_\RR$. For any $\epsilon>0$ and $i\gg1$, Theorem \ref{thm:beta=delta-general} and Lemma \ref{lem:lsc-beta} implies that
$$
\delta(X,L_i)\geq\beta(X,L_i)\geq 1-\epsilon.
$$
Thus we get
\begin{equation*}
    \begin{aligned}
    n=A_X(E)&\geq(1-\epsilon) S_{L_i}(E)\\
    &=\frac{1-\epsilon}{\vol(L_i)}\int_0^\infty\vol(\sigma^*L_i-xE)dx.\\
    \end{aligned}
\end{equation*}
Letting $i\rightarrow\infty$, by dominated convergence theorem and by sending $\epsilon\rightarrow 0$, we get
$$
n\geq\frac{1}{\vol(\xi)}\int_0^\infty\vol(\sigma^*\xi-xE)dx,
$$
so that (recall $\vol(\xi)=(n+1)^n$)
\begin{equation*}
    \begin{aligned}
    n&\geq\frac{1}{\vol(\xi)}\int_0^{\vol(\xi)^{1/n}}(\vol(\xi)-x^n)dx\\
    &=\frac{n}{n+1}\vol(\xi)^{1/n}=n.\\
    \end{aligned}
\end{equation*}
This gives
$$
\vol(\sigma^*\xi-xE)=\vol(\xi)-x^n
$$
for $x\in[0,\vol(\xi)^{1/n}]$ as claimed.
\end{proof}

As we have seen in the proof of Theorem \ref{thm:main-rational}, K. Fujita \cite[Theorem 2.3(2)]{F} says that, for ample $\QQ$-line bundles, the condition
$$
\vol(\sigma^*L-xE)=\vol(L)-x^n,\ x\in[0,a]
$$
implies that $\sigma^*L-xE$ is nef for $x\in[0,a]$, whose proof however heavily relies on the rationality of $L$ and the ampleness criterion of \cite{dFKL}.
To prove the same assertion for general ample $\RR$-line bundles, there is some subtlety involved if we follow Fujita's original argument. To overcome this, we take an alternative approach, using Newton--Okounkov bodies.

Before stating the key result, we recall the notion of \emph{local Seshadri constant}. Given a nef $\RR$-divisor $\xi$ and a point $p\in X$, let
$$
\epsilon_p:=\inf_{C}\frac{\xi\cdot C}{\operatorname{mult}_p(C)}
$$
denote the Seshadri constant of $\xi$ at $p$ (where the inf is over all the curves passing through $p$). Assume further that $\xi$ is big and nef, then $\epsilon_p>0$ for any general point $p\in X$. Indeed, as $\xi$ is big and nef, it can be written as $\xi=A+F$, where $A$ is an ample $\RR$-divisor and $F$ is an effective $\RR$-divisor (see \cite[Proposition 2.2.22]{L04}). Then $\xi$ has positive Seshadri constant at any point $p\in X-\operatorname{supp}(F)$.

\begin{proposition}
\label{prop:Fujita-nef-thm-for-R-divisor}
Let $X$ be a projective manifold. Let $\xi$ be a big and nef $\RR$-divisor. Pick a point $p\in X$ such that $\xi$ has positive Seshadri constant at $p$. Let $\hat{X}\xrightarrow{\sigma}X$ be the blow-up at $p$. Let $E$ be the exceptional divisor of $\sigma$. 
Suppose that there exists $a\in(0,\sqrt[n]{\vol(\xi)}]$ such that
$$
\vol(\sigma^*\xi-xE)=\vol(\xi)-x^n\text{ for any }x\in[0,a].
$$
Then $\sigma^*\xi-xE$ is nef for any $x\in[0,a]$.
\end{proposition}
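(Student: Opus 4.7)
The strategy is to combine the volume hypothesis with the K\"uronya--Lozovanu criterion (Theorem \ref{thm:nef-criterion}) applied through a carefully chosen flag. First I would reduce to proving $\sigma^*\xi-aE$ is nef: by convexity of the nef cone and the nefness of $\sigma^*\xi$, this forces nefness of $\sigma^*\xi-xE$ for every $x\in[0,a]$. The boundary case $a=\sqrt[n]{\vol(\xi)}$ follows by approximating $a'\nearrow a$ and using that the nef cone is closed, so I may assume $a<\sqrt[n]{\vol(\xi)}$, making $\sigma^*\xi-aE$ big.

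Next I would localise the possible non-nef locus to $E$. Any irreducible curve $C\subset\hat X$ with $(\sigma^*\xi-aE)\cdot C<0$ cannot lie in $E$ (since $\sigma^*\xi|_E=0$ and $E|_E=\mathcal{O}_E(-1)$ give $(\sigma^*\xi-aE)\cdot C=a\deg C\geq 0$ for $C\subset E$) and must meet $E$ (otherwise $(\sigma^*\xi-aE)\cdot C=\xi\cdot\sigma_*C\geq 0$ by nefness of $\xi$). Such $C$ lies in $B_-(\sigma^*\xi-aE)$, so the restricted base locus meets $E$. Contrapositively, it suffices to prove $B_-(\sigma^*\xi-aE)\cap E=\emptyset$.

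Fix an arbitrary $q\in E$ and pick an admissible flag $Y_\bullet$ on $\hat X$ with $Y_1=E$, $Y_n=\{q\}$, and $Y_2,\dots,Y_{n-1}$ a flag of linear subspaces of $E\cong\PP^{n-1}$ through $q$. By Theorem \ref{thm:nef-criterion} it suffices to show $0\in\Delta_{Y_\bullet}(\sigma^*\xi-aE)$; by Proposition \ref{prop:translte-Delta} (valid because $\vol(\sigma^*\xi-aE)>0$ gives $a<\tau(\sigma^*\xi,E)$) this is equivalent to $(a,0,\dots,0)\in\Delta:=\Delta_{Y_\bullet}(\sigma^*\xi)$. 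I would then collect three facts about $\Delta$: (i) $0\in\Delta$ by Theorem \ref{thm:nef-criterion} applied to the nef divisor $\sigma^*\xi$; (ii) $\sigma^*\xi-xE$ is nef for $x\in[0,\epsilon_p]$ by definition of the Seshadri constant, so the same reasoning combined with Proposition \ref{prop:translte-Delta} places the segment $[0,\epsilon_p]\times\{0\}^{n-1}$ inside $\Delta$; and (iii) from \eqref{eq:vol=vol}, Proposition \ref{prop:translte-Delta}, and the volume hypothesis,
\[
\vol_{\RR^n}\bigl(\Delta\cap\{\nu_1\geq x\}\bigr)=\frac{\vol(\sigma^*\xi-xE)}{n!}=\frac{\vol(\xi)-x^n}{n!},\qquad x\in[0,a],
\]
so differentiating, each slice $\Delta\cap\{\nu_1=x\}$ has $(n-1)$-volume $x^{n-1}/(n-1)!$ and $\vol_{\RR^n}(\Delta\cap\{\nu_1\leq a\})=a^n/n!$.

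The crucial step is a convex-geometric rigidity argument. Let $S:=\Delta\cap\{\nu_1=a\}\subset\{a\}\times\RR^{n-1}_{\geq 0}$, a convex body of $(n-1)$-volume $a^{n-1}/(n-1)!$. By convexity of $\Delta$, the cone $\mathcal{C}:=\operatorname{conv}(\{0\}\cup S)$ is contained in $\Delta\cap\{\nu_1\leq a\}$; its slice at $\nu_1=ta$ is the homothety $tS$ of volume $t^{n-1}a^{n-1}/(n-1)!$, hence $\vol_{\RR^n}(\mathcal{C})=\int_0^1 t^{n-1}a^n/(n-1)!\,dt=a^n/n!$. Matching total volumes forces $\mathcal{C}=\Delta\cap\{\nu_1\leq a\}$. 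Assuming $\epsilon_p\leq a$ (otherwise there is nothing to prove), ingredient (ii) puts $(\epsilon_p,0,\dots,0)\in\mathcal{C}$; writing this as $(\epsilon_p/a)(a,s)$ for some $s\in S$ and using $\epsilon_p>0$ forces $s=0$, so $0\in S$ and hence $(a,0,\dots,0)\in S\subset\Delta$. The main obstacle is exactly this rigidity step: in Fujita's original $\QQ$-line bundle argument it relied on the ampleness criterion of \cite{dFKL} and hence on rationality, whereas Newton--Okounkov bodies together with the K\"uronya--Lozovanu criterion replace it by a purely convex-geometric calculation that is insensitive to rationality.
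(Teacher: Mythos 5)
Your proposal is correct and follows essentially the same route as the paper's proof: you localize the failure of nefness to curves meeting $E$, choose a flag with $Y_1=E$ and $Y_n=\{q\}$, and combine the K\"uronya--Lozovanu criterion, the translation property of Newton--Okounkov bodies, and the cone rigidity forced by the slice volumes $x^{n-1}/(n-1)!$ together with the Seshadri segment $[0,\epsilon_p]$ to place $(a,0,\dots,0)$ in $\Delta_{Y_\bullet}(\sigma^*\xi)$. The only cosmetic differences are that you argue directly at the endpoint $x=a$ (after reducing to $a<\sqrt[n]{\vol(\xi)}$ and using closedness of the nef cone) rather than by contradiction at an interior $x_0$, and that your claim that the slice at $\nu_1=a$ has volume exactly $a^{n-1}/(n-1)!$ deserves a one-line justification (continuity of the slice-volume function at $a$, e.g.\ via Brunn--Minkowski concavity and $a$ being interior to the support), a point the paper sidesteps by proving the cone identity for $r<a$ and letting $r\to a$.
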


\begin{proof}
Let
$\epsilon_p>0$
denote the Seshadri constant of $\xi$ at $p$. Then 
$$\sigma^*\xi-x E \text{ is nef for any } x\in[0,\epsilon_p].$$
Assume that $\epsilon_p<a$, otherwise we are done.

We argue by contradiction. Suppose that $\sigma^*\xi-x_0E$ is not nef for some $x_0\in(\epsilon_p,a)$.
Then there exists some curve $C$ intersecting $\sigma^*\xi-x_0E$ negatively. Note that such $C$ necessarily intersects $E$ by the projection formula.
Thus the restricted base loci $B_-(\sigma^*\xi-x_0E)$ intersects $E$ as well.
So we can pick a point
$$
q\in B_-(\sigma^*\xi-x_0E)\cap E.
$$
Moreover, we build an admissible flag $Y_\bullet$ on $\hat{X}$ such that
$$
Y_1:=E\text{ and }Y_n:=\{q\}.
$$
This is doable because $E\cong\PP^{n-1}$ and $Y_2\supset...\supset Y_n=\{q\}$ can be chosen to be a flag of linear subspaces in $\PP^{n-1}$.
Then we get a Newton--Okounkov body $\Delta_{Y_\bullet}(\sigma^*\xi)$. As $\sigma^*\xi$ is nef, $B_-(\sigma^*\xi)=\emptyset$. So 
Theorem \ref{thm:nef-criterion} implies that $$0\in\Delta_{Y_\bullet}(\sigma^*\xi).$$
Also note that, by our assumption,
$$
\vol(\sigma^*\xi-xE)=\vol(\xi)-x^n>0\ \text{for all } x\in[0,a).
$$
So $\sigma^*\xi-xE$ is big for all $x\in[0,a)$ and hence the pseudo-effective threshold $\tau(\xi,E)$ satisfies $\tau(\xi,E)\geq a$.
Now by Proposition \ref{prop:translte-Delta}, for any $x\in[0,a),$ the Newton--Okounkov body $\Delta_{Y\bullet}(\sigma^*\xi-xE)$ can be obtained from $\Delta_{Y_\bullet}(\sigma^*\xi)$ by truncating and translating in the first coordinate $\nu_1$ of $\RR^n$. Therefore (recall \eqref{eq:vol=vol}), for $x\in(0,a)$,
$$
\vol_{\RR^n}\bigg(\Delta_{Y_\bullet}(\sigma^*\xi)\cap\{0\leq \nu_1\leq x\}\bigg)=\frac{1}{n!}\bigg(\vol(\xi)-\vol(\sigma^*\xi-xE)\bigg)=\frac{x^n}{n!}.
$$
For $r\in(0,a)$, consider the slice
$$
S_r:=\Delta_{Y_\bullet}(\sigma^*\xi)\cap\{\nu_1=r\},
$$
and put
$$
A(r):=\vol_{\RR^{n-1}}(S_r).
$$
Then
$$
\int_0^xA(r)dr=\vol_{\RR^n}\bigg(\Delta_{Y_\bullet}(\sigma^*\xi)\cap\{0\leq \nu_1\leq x\}\bigg)=\frac{x^n}{n!},\ x\in(0,a),
$$
which implies that
$$
A(r)=\frac{r^{n-1}}{(n-1)!}\ \text{for }r\in(0,a).
$$
Note that the Brunn--Minkowski inequality in convex geometry says that $A(r)^{\frac{1}{n-1}}$ is concave in its support. However in our case, $A(r)^{\frac{1}{n-1}}$ is in fact \emph{linear}, so we are in the equality case of the Brunn--Minkowski inequality. This means that all the slices $S_r$ are \emph{homothetic}. 
We claim that, this forces 
$
\Delta_{Y_\bullet}(\sigma^*\xi)
$
to be a convex cone over the $(n-1)$-dimensional convex set $\Sigma:=\Delta_{Y_\bullet}(\sigma^*\xi)\cap\{v_1=a\}$. 
Indeed, for $r\in(0,a)$, consider the cone over $S_r$:
$$
C(S_r):=\{\lambda v\ |\ \lambda\in[0,1],\ v\in S_r\}.
$$
Then $C(S_r)\subseteq\Delta_{Y_\bullet}(\sigma^*\xi)\cap\{0\leq \nu_1\leq r\}$ by convexity. On the other hand,
$$
\vol_{\RR^n}(C(S_r))=\int_0^r(\frac{s}{r})^{n-1}A(r)ds=\frac{r^n}{n!}=\vol_{\RR^n}\bigg(\Delta_{Y_\bullet}(\sigma^*\xi)\cap\{0\leq \nu_1\leq r\}\bigg).
$$
This implies that
$$
\Delta_{Y_\bullet}(\sigma^*\xi)\cap\{0\leq \nu_1\leq r\}=C(S_r),\ \text{for any }r\in(0,a). 
$$
Sending $r\rightarrow a$, we find that
$
\Delta_{Y_\bullet}(\sigma^*\xi)\cap\{0\leq\nu_1\leq a\}
$ is a cone over $\Sigma$, as claimed. Now recall that $\sigma^*\xi-xE$ is nef for $x\in[0,\epsilon_p]$, so $B_-(\sigma^*\xi-xE)=\emptyset$ and hence by Theorem \ref{thm:nef-criterion},
$$
0\in\Delta_{Y_\bullet}(\sigma^*\xi-xE),\ \text{for all }x\in[0,\epsilon_p].
$$
Then by Proposition \ref{prop:translte-Delta}, $\Delta_{Y_\bullet}(\sigma^*\xi)\cap\{0\leq \nu_1\leq a\}$ contains the line segment $\{(x,0,...,0)|x\in[0,\epsilon_p]\}$,
so the cone property forces it to contain the \emph{whole} line segment
$$\{(x,0,...,0)|x\in[0,a]\}.$$
Intuitively, one has the following picture.
\begin{figure}[H]
\centering
\includegraphics[width=0.48\textwidth]{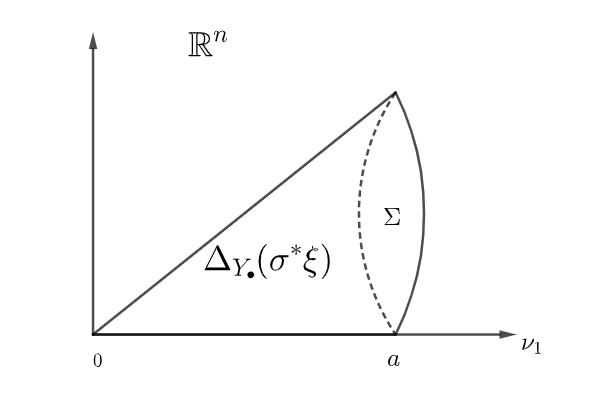}
\caption{$\Delta_{Y_\bullet}(\sigma^*\xi)\cap\{0\leq\nu_1\leq a\}$}
\end{figure}
Then by Proposition \ref{prop:translte-Delta} again,
$
0\in\Delta_{Y\bullet}(\sigma^*\xi-x_0E)
$
and hence $q\notin B_-(\sigma^*\xi-x_0E)$ by Theorem \ref{thm:nef-criterion}, which contradicts our choice of $q$.
Thus $\sigma^*\xi-xE$ is nef for any $x\in[0,a]$.

\end{proof}

\begin{remark}
We are kindly informed by a referee that, the above result also follows from \cite[Theorem 1.3]{PS}.
\end{remark}


\begin{proof}[Finishing the proof of Theorem \ref{thm:main-general}] It remains to consider the equality case: $\beta(X,\xi)^n\vol(\xi)=(n+1)^n$. Applying Lemma \ref{lem:Fujita-ineq-Kahler-class}(2) and Proposition \ref{prop:Fujita-nef-thm-for-R-divisor},
$$
\sigma^*\xi-\frac{n+1}{\beta(X,\xi)}E\text{ is nef}.
$$
On the other hand, by \eqref{eq:beta<=epsilon},
$$
-K_X-\beta(X,\xi)\xi\text{ is nef}.
$$
So we find that
$$
\sigma^*(-K_X)-(n+1)E=\sigma^*\bigg(-K_X-\beta(X,\xi)\xi\bigg)+\beta(X,\xi)\bigg(\sigma^*\xi-\frac{n+1}{\beta(X,\xi)}E\bigg)
$$
is nef as well. Since $p\in X$ can be chosen arbitrarily, we conclude that $X\cong\PP^n$ by \cite{CMSB,K02}.
\end{proof}

Finally, we are able to prove the main result of this paper.
\begin{theorem}
[=Theorem \ref{thm:vol-kahler}]
Let $(X,\o)$ be an $n$-dimensional K\"ahler manifold with
$$
\Ric(\o)\geq(n+1)\o.
$$
Then one has
$$
\int_X\o^n\leq(2\pi)^n,
$$
and the equality holds if and only if $(X,\o)$ is biholomorphically isometric to $(\PP^n,\o_{FS})$.
\end{theorem}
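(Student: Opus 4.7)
The plan is to reduce the differential-geometric statement to an instance of Theorem \ref{thm:main-general} via the greatest Ricci lower bound. I set $\xi := [\omega]/(2\pi) \in H^{1,1}(X,\RR)$. The hypothesis $\Ric(\omega) \geq (n+1)\omega > 0$ implies that $X$ is Fano and $\xi \in \mathcal{K}(X)$; moreover, the given form $\omega \in 2\pi\xi$ directly witnesses $\beta(X,\xi) \geq n+1$ in the definition \eqref{eq:def-beta-xi}. Applying Theorem \ref{thm:main-general} then yields
$$
(n+1)^n \vol(\xi) \leq \beta(X,\xi)^n \vol(\xi) \leq (n+1)^n,
$$
and since $\vol(\xi) = (2\pi)^{-n} \int_X \omega^n$, this rearranges to the desired bound $\int_X \omega^n \leq (2\pi)^n$.

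For the equality case, $\int_X \omega^n = (2\pi)^n$ forces every inequality above to be an equality, in particular $\beta(X,\xi)^n \vol(\xi) = (n+1)^n$; the rigidity part of Theorem \ref{thm:main-general} then supplies a biholomorphism $X \cong \PP^n$. Transporting $\omega$ along it, I may assume outright that $X = \PP^n$. Since $H^{1,1}(\PP^n,\RR) \cong \RR$ is spanned by $[\omega_{FS}]$, we must have $[\omega] = c\,[\omega_{FS}]$ for some $c > 0$, and matching total volumes forces $c = 1$, i.e.\ $[\omega] = [\omega_{FS}]$.

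To promote this biholomorphism to a biholomorphic isometry, I would observe that the pointwise semi-positive real $(1,1)$-form $\Ric(\omega) - (n+1)\omega$ represents the cohomology class $2\pi c_1(\PP^n) - (n+1)[\omega_{FS}] = 0$, so
$$
\int_{\PP^n} \bigl(\Ric(\omega) - (n+1)\omega\bigr) \wedge \omega^{n-1} = 0.
$$
Because wedging a semi-positive $(1,1)$-form with $\omega^{n-1}$ produces a non-negative top form, the integrand vanishes identically, which forces $\Ric(\omega) = (n+1)\omega$. Thus $\omega$ is a Kähler--Einstein metric in the class $[\omega_{FS}]$, and the Bando--Mabuchi uniqueness theorem for KE metrics on Fano manifolds provides an automorphism $\phi \in \Aut(\PP^n)$ with $\phi^*\omega_{FS} = \omega$, completing the isometry. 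The substantive obstacle has already been overcome in Theorem \ref{thm:main-general}; the one subtle point in the residual argument is that naive uniqueness within $[\omega_{FS}]$ is false on $\PP^n$, so Bando--Mabuchi (rather than an elementary maximum principle) is essential for the isometry step.
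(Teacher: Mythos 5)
Your proposal is correct and follows essentially the same route as the paper: reduce to Theorem \ref{thm:main-general} via $\beta(X,\xi)\geq n+1$ for $\xi=\frac{1}{2\pi}[\omega]$, identify $[\omega]=[\omega_{FS}]$ in the equality case, deduce $\Ric(\omega)=(n+1)\omega$, and conclude with Bando--Mabuchi. The only cosmetic difference is that you obtain the Einstein equation by integrating $(\Ric(\omega)-(n+1)\omega)\wedge\omega^{n-1}$ against the vanishing cohomology class, whereas the paper writes $\Ric(\omega)-(n+1)\omega=\sqrt{-1}\ddbar f\geq 0$ and forces $f$ to be constant; both steps are valid and equivalent in substance.
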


\begin{proof}
Consider the K\"ahler class $\xi:=\frac{1}{2\pi}[\omega]$. Then $\beta(X,\xi)\geq(n+1)$. So $\vol(\xi)\leq1$
by Proposition \ref{prop:beta-V-bd-for-xi}. 
In other words,
$$
\int_X\omega^n=(2\pi)^n\vol(\xi)\leq(2\pi)^n.
$$
And the equality holds if and only if $X\cong\PP^n$ by Theorem \ref{thm:main-general}, in which case, the equality
$
\int_{\PP^n}\o^n=(2\pi)^n
$
implies that
$$
[\o]=2\pi c_1(\mathcal{O}_{\PP^n}(1)).
$$
So $\ddbar$-lemma gives some $f\in C^\infty(\PP^n,\RR)$ such that
$$
\sqrt{-1}\ddbar f=\Ric(\o)-(n+1)\o\geq0,
$$
which forces $f$ to be a constant. Thus $\o$ satisfies the K\"ahler--Einstein equation
$$
\Ric(\o)=(n+1)\o.
$$
Now by the uniqueness of KE metrics \cite{BM}, we obtain $\o=\o_{FS}$ up to an automorphism.
\end{proof}

\newpage

\appendix

\section{Volume gap for K\"ahler manifolds with positive Ricci curvature}

  \begin{center}
    \author{by Yuchen Liu \\ Department of Mathematics, Yale University,\\
New Haven, CT 06511, USA\\
Email: \texttt{yuchen.liu@yale.edu}
}
\end{center}
\bigskip

In this appendix, we will prove Theorem \ref{thm:volgap}, which is a direct consequence of the following theorem.

\begin{theorem}\label{thm:gap}
Let $X$ be an $n$-dimensional Fano manifold. Let $\xi\in N^1(X)_{\bR}$ be an 
ample $\bR$-line bundle. Then there exists $\varepsilon=\varepsilon(n)>0$ such that
if 
\[
 \beta(X,\xi)^n\cdot\Vol(X,\xi)\geq (n+1)^n -\varepsilon,
\]
then $X$ is biholomorphic to $\bP^n$.
\end{theorem}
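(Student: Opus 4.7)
The plan is to argue by contradiction together with a boundedness-plus-compactness argument. The main ingredients I would use are the equality case of Theorem \ref{thm:main-general}, the boundedness of smooth Fano $n$-folds (Koll\'ar--Miyaoka--Mori), the continuity of $\beta(X,\cdot)$ on $\mathcal{K}(X)$ (recorded in the footnote to Lemma \ref{lem:lsc-beta}), and the deformation rigidity of $\PP^n$ (every smooth deformation of $\PP^n$ is biholomorphic to $\PP^n$).

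Suppose the conclusion fails. Then there exists a sequence of pairs $(X_i,\xi_i)$ with $X_i$ a smooth Fano $n$-fold satisfying $X_i\not\cong\PP^n$ and $\xi_i\in N^1(X_i)_{\RR}$ an ample class such that $\beta(X_i,\xi_i)^n\Vol(\xi_i)\to(n+1)^n$. Since $\beta^n\Vol$ is scale invariant in $\xi$, I rescale to obtain $\beta(X_i,\xi_i)=n+1$ for every $i$, whence $\Vol(\xi_i)\to 1$. By \eqref{eq:beta<=epsilon} this forces $\epsilon(X_i,\xi_i)\geq n+1$, so the class $-K_{X_i}-(n+1)\xi_i$ is nef; in particular each $\xi_i$ is confined to a bounded slice of the nef cone of $X_i$.

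Next I would invoke boundedness of smooth Fano $n$-folds to realize a subsequence of the $X_i$ as fibers of a single smooth projective family $\mathcal{X}\to T$ over a connected, finite-type base $T$. After possibly shrinking $T$, the relative N\'eron--Severi space trivializes as a local system, so the classes $\xi_i$ can be viewed as points of a fixed finite-dimensional real vector space $V$; combined with the previous step they are bounded in $V$. A further subsequence therefore yields $\xi_i\to\xi_\infty\in V$ and base points $t_i\to t_\infty$ in a suitable compactification of $T$. The continuity of $\Vol$ on the big cone, together with (lower semi)continuity of $\beta$ across the family, then forces $\beta(X_\infty,\xi_\infty)^n\Vol(\xi_\infty)=(n+1)^n$ for the limit fiber $X_\infty$ and the limit class $\xi_\infty$. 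The equality case of Theorem \ref{thm:main-general} gives $X_\infty\cong\PP^n$, and by deformation rigidity of $\PP^n$ the nearby fibers $X_i$ are also biholomorphic to $\PP^n$, contradicting the choice of the sequence.

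The hard part, in my view, is cleanly executing the two limits. A priori $\xi_\infty$ may lie on the boundary of the ample cone and $X_\infty$ may acquire singularities, while the equality case of Theorem \ref{thm:main-general} is phrased for K\"ahler (i.e.\ ample) classes on smooth Fanos; to close the argument one therefore either has to extend that rigidity to the nef-and-big case on mildly singular Fanos---mimicking the Newton--Okounkov body argument of Proposition \ref{prop:Fujita-nef-thm-for-R-divisor}---or use the uniform Seshadri lower bound $\epsilon\geq n+1$ to keep $\xi_\infty$ away from the boundary and to prevent the family from degenerating. Equally delicate is justifying lower semicontinuity of $\beta$ (equivalently of $\delta$ together with $\epsilon$) across the varying family $\mathcal{X}\to T$, rather than on a single fixed fiber as in Lemma \ref{lem:lsc-beta}; this seems to be where the real technical work is concentrated.
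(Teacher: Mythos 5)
Your overall skeleton (contradiction, boundedness of Fano $n$-folds, extraction of a limit class, rigidity, deformation rigidity of $\PP^n$) matches the appendix, but the two points you flag as ``the hard part'' are precisely the content of the actual proof, and your proposed way around them does not close. The first genuine gap is the double limit itself: boundedness gives a smooth family $\pi\colon\cX\to T$ of finite type, but the base $T$ is not compact, so the points $t_i$ need not converge in $T$ and there is in general no smooth Fano ``limit fiber $X_\infty$'' on which to run a semicontinuity argument; passing to ``a suitable compactification of $T$'' reintroduces exactly the singular degenerations you cannot handle. The paper's key idea, absent from your proposal, is to avoid taking any limit in the base: one perturbs $\xi_i$ to ample $\QQ$-line bundles $L_i$, extends them to $\QQ$-line bundles $\cL_i$ on $\cX$ (after trivializing $R^2\pi_*\ZZ$), and then fixes a \emph{single very general fiber} $\cX_t$, using the lower semicontinuity of $\delta$-invariants in families \cite{BL18} together with the genericity of ampleness to get $\delta(\cX_t,\cL_{i,t})\geq\delta(X_i,L_i)$ and $\epsilon(\cX_t,\cL_{i,t})\geq\epsilon(X_i,L_i)$, hence $\beta(\cX_t,\cL_{i,t})\geq\beta(X_i,L_i)$ by Theorem \ref{thm:beta=delta-general}. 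All subsequent limits then take place in the fixed finite-dimensional space $N^1(\cX_t)_\RR$ (boundedness there comes from the uniform Seshadri lower bound, as in Lemma \ref{lem:sesh}), with no degeneration of the underlying manifold. You identify the need for semicontinuity ``across the varying family'' but supply neither the statement nor the mechanism; that is the missing ingredient, not a routine verification.

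The second gap is the endgame. After rescaling, the limit $\xi_\infty\in N^1(\cX_t)_\RR$ is only nef and big, so $\beta(\cX_t,\xi_\infty)$ is not even defined and the equality case of Theorem \ref{thm:main-general} cannot be invoked as a black box; nor does the uniform bound $\epsilon(X_i,\xi_i)\geq n+1$ keep $\xi_\infty$ inside the ample cone, since Seshadri constants can degenerate in the limit. The paper instead reruns the Fujita-type computation directly with $\delta(\cX_t,\cL'_{i,t})\geq 1-\varepsilon$ and dominated convergence (as in Lemma \ref{lem:Fujita-ineq-Kahler-class}), applies Proposition \ref{prop:Fujita-nef-thm-for-R-divisor} to get that $\sigma^*\xi_\infty-(n+1)E$ is nef, and combines this with the nefness of $-K_{\cX_t}-\xi_\infty$ to conclude that the Seshadri constant of $-K_{\cX_t}$ at the chosen point $p$ is at least $n+1$. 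Since $p$ must be a point where $\xi_\infty$ has positive Seshadri constant, it cannot be chosen arbitrarily, so the conclusion $\cX_t\cong\PP^n$ comes from the one-point Seshadri characterizations \cite{BS09,LZ} rather than \cite{CMSB,K02}; only then does the deformation rigidity of $\PP^n$ \cite{K96} give $X_i\cong\PP^n$ and the contradiction. Your proposal would need both of these arguments spelled out to become a proof.
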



\begin{lemma}\label{lem:sesh}
Let $X$ be a Fano manifold. Let $\{L_i\}$ be a sequence 
of ample $\bQ$-line bundles on $X$. If $\epsilon(X,L_i)$ has
a positive lower bound, then $\{L_i\}$ is a bounded sequence in $N^1(X)_{\bR}$.
\end{lemma}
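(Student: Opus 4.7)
The plan is to observe that the uniform positive lower bound on the Seshadri constants forces each $L_i$ into a bounded region of $N^1(X)_\bR$, with the boundedness coming from the strong convexity of the nef cone.

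Suppose $\epsilon(X, L_i) \geq c > 0$ for all $i$. First I would verify, using the definition \eqref{eq:def-sashadri-L} together with closedness of the nef cone, that $c_1(X) - c L_i$ is nef for every $i$. The set $\{\mu \geq 0 : c_1(X) - \mu L_i \text{ is nef}\}$ contains $(0, \epsilon(X,L_i))$, is closed so contains $\epsilon(X, L_i)$, and is downward-closed: if $\mu_0$ lies in it and $0 < \mu < \mu_0$, then
$$
c_1(X) - \mu L_i = \bigl(c_1(X) - \mu_0 L_i\bigr) + (\mu_0 - \mu) L_i
$$
is a sum of a nef and an ample class, hence nef. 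Consequently each $L_i$ lies in
$$
T := \{L \in N^1(X)_\bR : L \text{ is nef and } c_1(X) - cL \text{ is nef}\},
$$
so it is enough to prove that $T$ is bounded.

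To bound $T$, I would argue by contradiction. If there existed $\{M_k\} \subset T$ with $\|M_k\| \to \infty$, then after passing to a subsequence $M_k/\|M_k\|$ would converge to some unit vector $M_\infty \in N^1(X)_\bR$, which is nef since the nef cone is closed. Dividing $c_1(X) - cM_k$ by $\|M_k\|$ and letting $k \to \infty$ would show that $-cM_\infty$, and hence $-M_\infty$, is also nef. But the nef cone of a projective variety is strongly convex, i.e.\ contains no line: if both $\xi$ and $-\xi$ are nef, then $\xi \cdot C \geq 0$ and $(-\xi) \cdot C \geq 0$ for every irreducible curve $C$, so $\xi \cdot C = 0$ for every such $C$, forcing $\xi = 0$ in $N^1(X)_\bR$ by definition of numerical equivalence. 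Hence $M_\infty = 0$, contradicting $\|M_\infty\| = 1$.

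The only genuinely nontrivial ingredient is the strong convexity of the nef cone, which is standard for projective varieties, so I do not expect any serious obstacle beyond assembling these elementary convex-geometric observations. Note that the hypothesis that $X$ is Fano is not actually needed; only the projectivity of $X$ is used (to ensure the nef cone is strongly convex).
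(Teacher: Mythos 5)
Your argument is correct: the reduction to the set $T=\{L:\ L\ \text{nef},\ c_1(X)-cL\ \text{nef}\}$ is legitimate (the downward-closedness plus closedness of the nef cone does give nefness of $c_1(X)-cL_i$, exactly as the paper tacitly uses), and the boundedness of $T$ follows from your normalization/compactness argument because the nef cone of a projective manifold is closed and salient, with salience coming from the fact that curves detect numerical triviality; finite-dimensionality of $N^1(X)_\RR$, needed to extract the convergent subsequence, holds by the N\'eron--Severi theorem. The paper's proof is the same geometry made quantitative rather than soft: it fixes a basis $[C_1],\dots,[C_\rho]$ of $N_1(X)_\RR$ by irreducible curves, uses ampleness of $L_i$ and nefness of $-K_X-aL_i$ to get $0<(L_i\cdot C_j)\le a^{-1}(-K_X\cdot C_j)$, and bounds the dual max norm by $a^{-1}\sum_j(-K_X\cdot C_j)$, so it produces an explicit effective bound, whereas your contradiction argument is non-constructive but avoids choosing a curve basis and isolates the one structural input (strong convexity of the nef cone) cleanly. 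Note that the two salience statements are literally the same fact --- $\xi\cdot C\ge 0$ and $-\xi\cdot C\ge 0$ for all curves forces $\xi=0$ --- so the approaches differ in packaging (explicit dual-norm estimate versus limit argument), not in the underlying mechanism; your observation that Fano is not really needed, only projectivity, is also consistent with the paper's proof, which uses the Fano hypothesis only through the fixed class $c_1(X)=-K_X$ appearing in the definition of $\epsilon(X,\cdot)$.
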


\begin{proof}
Choose a basis $\{[C_1], [C_2],\cdots, [C_\rho]\}$ of $N_1(X)_{\bR}$ where $\rho$ is the Picard rank of $X$ and each $C_j$ is an irreducible curve on $X$. Let  $||\cdot||$ be the norm on $N_1(X)_\RR$ as the maximum norm with respect to the basis $\{[C_j]\}$. We also denote its dual norm on $N^1(X)_\RR$ by $||\cdot||$ as abuse of notation. Choose $a>0$ such that  $\epsilon(X,L_i)\geq a>0$ for any $i$. Then $-K_X-aL_i$ is nef which implies
\[
0<(L_i\cdot C_j)\leq a^{-1}(-K_X\cdot C_j)
\]
for any $i$ and $j$. Hence we know that 
\[
||L_i||=\sup_{\max_{j}|c_j|=1} |(L_i\cdot \sum_{j=1}^\rho c_j C_j)|\leq \sum_{j=1}^\rho |(L_i\cdot C_j)|\leq a^{-1}\sum_{j=1}^\rho (-K_X\cdot C_j).
\]
Thus the proof is finished.
\end{proof}

\begin{proof}[Proof of Theorem \ref{thm:gap}]
 Assume to the contrary that $(X_i,\xi_i)$ is a sequence of Fano manifolds
 not biholomorphic to $\bP^n$ with ample $\bR$-line bundles,
 such that 
 \[
  \lim_{i\to\infty} \beta(X_i,\xi_i)^n\cdot\Vol(X_i,\xi_i)=(n+1)^n.
 \]
 Since $\beta(X,\cdot)$ is a lower semi-continuous function, 
 after perturbing $\xi_i$ to ample $\bQ$-line bundles $L_i$, we may assume that
 \begin{equation}\label{eq:appendix-0}
  \lim_{i\to\infty} \beta(X_i,L_i)^n\cdot\Vol(X_i,L_i)=(n+1)^n.
 \end{equation}
 By boundedness of Fano manifolds \cite{C92, KMM}, after passing to a subsequence 
 we may assume that there exists a smooth Fano family $\pi: \cX\to T$ 
 over an irreducible smooth base $T$ and a sequence of closed points $\{t_i\}\subset T$ such that
 $X_i\cong \cX_{t_i}$. By Lefschetz $(1,1)$ theorem and Kodaira vanishing theorem we know that $\Pic(X)\cong H^2(X,\bZ)$
 for a Fano manifold $X$. Hence after replacing $T$ by an irreducible component of its \'etale cover,
 we may assume that $R^2\pi_*\bZ$ is a trivial local system on $T$.
 Then $L_i$ extends to a $\bQ$-line bundle $\cL_i$ on $\cX$ that is generically $\pi$-ample. Since the volume of an ample $\bQ$-line bundle is the top self intersection number, we know that $\Vol(X_i, L_i)= \Vol(\cX_t, \cL_{i,t})$ for a general $t\in T$.
 
 Let $t\in T$ be a very general closed point. By the lower semi-continuity of $\delta$-invariants \cite{BL18} (with respect to the Zariski topology) and the genericity of ampleness, we know that 
 \[
 \delta(\cX_t, \cL_{i,t})\geq \delta(X_i, L_i)\quad \textrm{and}\quad \epsilon(\cX_t, \cL_{i,t})\geq \epsilon(X_i, L_i).
 \]
 Therefore, by Theorem \ref{thm:beta=delta-general} we have $\beta(\cX_t,\cL_{i,t})\geq \beta(X_i, L_i)$. This together with \eqref{eq:appendix-0} implies that 
 \begin{equation}\label{eq:appendix-1}
  \liminf_{i\to\infty} \beta(\cX_t,\cL_{i,t})^n\cdot\Vol(\cX_t,\cL_{i,t})\geq (n+1)^n.
 \end{equation}
 Let us choose a sequence of positive rational numbers $\{b_i\}_{i\in \bZ_{>0}}$ such that $
 \lim_{i\to\infty}b_i\cdot  \Vol(\cX_t, \cL_{i,t})^{\frac{1}{n}}=n+1$.
Denote by $\cL_i':= b_i \cL_i$ the rescaled $\bQ$-line bundle of $\cL_i$. Then we have
 \begin{equation}\label{eq:appendix-2}
 \lim_{i\to\infty}\Vol(\cX_t, \cL'_{i,t})=\lim_{i\to\infty}b_i^n\cdot \Vol(\cX_t, \cL_{i,t})=(n+1)^n.
 \end{equation}
 Since $\beta(\cX_t,\cL'_{i,t})=b_i^{-1}\cdot \beta(\cX_t,\cL_{i,t})$, \eqref{eq:appendix-1} and \eqref{eq:appendix-2} imply that 
 \begin{equation}\label{eq:appendix-3}
 \liminf_{i\to\infty} \beta(\cX_t,\cL'_{i,t})\geq 1.
 \end{equation}
 On the other hand, \eqref{eq:appendix-2} together with \eqref{eq:BJ17-thm-D} implies that 
 \begin{equation}\label{eq:appendix-4}
 \limsup_{i\to \infty} \delta(\cX_t, \cL'_{i,t})\leq 1.
 \end{equation}
 By Theorem \ref{thm:beta=delta-general}, we have $\beta(\cX_t, \cL'_{i,t})=\min\{\delta(\cX_t, \cL'_{i,t}),\epsilon(\cX_t, \cL'_{i,t})\}$. 
 Hence \eqref{eq:appendix-3} and \eqref{eq:appendix-4} imply that
 \begin{equation}\label{eq:appendix-5}
 \lim_{i\to\infty} \delta(\cX_t,\cL'_{i,t})=1 \quad\textrm{and}
 \quad \liminf_{i\to\infty}\epsilon(\cX_t,\cL'_{i,t})\geq 1.
 \end{equation}
 Hence by Lemma \ref{lem:sesh} we know that $\{\cL'_{i,t}\}$ is a bounded sequence of $N^1(\cX_t)_{\bR}$,
 which (after passing to a subsequence) converges to $\xi_\infty\in N^1(\cX_t)_{\bR}$. Then $\xi_\infty$ is a nef and big $\RR$-divisor with
 $\Vol(\cX_t,\xi_\infty)=(n+1)^n$. Pick a point $p\in\mathcal{X}_t$ such that $\xi_\infty$ has positive Seshadri constant at $p$. Let $\sigma:\hat{\cX}_t\to \cX_t$ be the blow up of $\cX_t$ at $p$ with exceptional divisor $E$. Let $\varepsilon>0$ be a positive constant. Then we have $\delta(\cX_t, \cL'_{i,t})\geq 1-\varepsilon$ for $i\gg 1$. Hence we have
 \[
 n=A_{\cX_t}(E)\geq (1-\varepsilon)S_{\cL'_{i,t}}(E)=\frac{1-\varepsilon}{\Vol(\cX_t, \cL'_{i,t})}\int_0^\infty\vol(\sigma^*\cL'_{i,t}-xE)dx.
 \]
 Then by the dominated convergence theorem similar to the proof of Lemma \ref{lem:Fujita-ineq-Kahler-class}, we have 
 \[
 n\geq \frac{1}{\Vol(\cX_t, \xi_\infty)}\int_0^\infty \vol(\sigma^*\xi_\infty -xE)dx. 
 \]
 Since $\Vol(\cX_t, \xi_\infty)=(n+1)^n$, the proof of Lemma \ref{lem:Fujita-ineq-Kahler-class} proceeds to showing that $\vol(\sigma^*\xi_\infty-xE)=\vol(\xi_\infty)-x^n$ for any $x\in [0, \vol(\xi_\infty)^{1/n}]$. Thus Proposition \ref{prop:Fujita-nef-thm-for-R-divisor} implies that $\sigma^*\xi_\infty-(n+1)E$ is nef. By \eqref{eq:appendix-5} we know that $-K_{\cX_t}-\xi_\infty$ is nef, hence $\sigma^*(-K_{\cX_t})-(n+1)E$ is nef. In other words, the Seshadri constant of $-K_{\cX_t}$ at $p$ is at least $n+1$. Hence we conclude that $\cX_t\cong \bP^n$ by \cite{BS09, LZ}. Then $X_i\cong\bP^n$ by rigidity of $\bP^n$
 under smooth deformation \cite[Exercise V.1.11.12.2]{K96},
 which is a contradiction.
\end{proof}


\begin{thebibliography}{1}


\bibitem{BM}
S. Bando, T. Mabuchi, Uniqueness of Einstein K\"ahler metrics modulo connected group actions. Algebraic geometry, Sendai, 1985, 11-40, Adv. Stud. Pure Math., \textbf{10}, North-Holland, Amsterdam, 1987.


\bibitem{BS09}
T. Bauer, T. Szemberg, Seshadri constants and the generation of jets. J. Pure Appl. Algebra 213, (2009), 2134–-40.

\bibitem{BB}
R. Berman, B. Berndtsson, The volume of K\"ahler--Einstein Fano varieties and convex bodies. J. Reine Angew. Math. \textbf{723} (2017), 127-152.




\bibitem{BBJ18}
R. J. Berman, S. Boucksom, M. Jonsson. A variational approach to the Yau-Tian-Donaldson conjecture,
arXiv:1509.04561v2 (2018).

\bibitem{BJ17}
H. Blum, M. Jonsson, Thresholds, valuations, and $K$-stability, arXiv:1706.04548 (2017).

\bibitem{BL18}
H. Blum and Y. Liu, Openness of uniform K-stability in families of Q-Fano varieties
arXiv:1808.09070 (2018).


\bibitem{Cable}
J. Cable,
Greatest Lower Bounds on Ricci Curvature for Fano T-manifolds of Complexity 1, Bull. Lond. Math. Soc. \textbf{51} (2019), 34-42.

\bibitem{C92}
F. Campana, Connexit\'e rationnelle des vari\'et\'es de Fano. (French) Ann. Sci. \'Ecole Norm.
Sup. (4) \textbf{25} (1992), 539-545.


\bibitem{CC97}
J. Cheeger, T.H. Colding, On the structure of spaces with Ricci curvature bounded below. I. J. Differential Geom. \textbf{46} (1997), 406-480.

\bibitem{CRZ}
I. Cheltsov, Y.A. Rubinstein, K. Zhang, Basis log canonical thresholds, local intersection estimates, and asymptotically log del Pezzo surfaces, Selecta Math. (N.S.) 25 (2019), Art. 34, 36 pp. 

\bibitem{CMSB}
K. Cho, Y. Miyaoka and N. I. Shepherd-Barron, Characterizations of
projective spaces and applications to complex symplectic manifolds, Higher dimensional birational geometry (Kyoto, 1997), 1-88, Adv. Stud. Pure Math. \textbf{35}, Math. Soc. Japan, Tokyo, 2002.

\bibitem{Don18}
S.K. Donaldson, Some recent developments in K\"ahler geometry and exceptional holonomy. Proceedings of the International Congress of Mathematicians—Rio de Janeiro 2018. Vol. I. Plenary lectures, 425-451, World Sci. Publ., Hackensack, NJ, 2018.


\bibitem{dFKL}
T. de Fernex, A. K\"uronya and R. Lazarsfeld, Higher cohomology of divisors on a projective variety, Math. Ann. \textbf{337} (2007), 443-455.

\bibitem{ELMNP}
L. Ein, R. Lazarsfeld, M. Mustaţă, M. Nakamaye, M. Popa, Asymptotic invariants of base loci. Ann. Inst. Fourier (Grenoble) \textbf{56} (2006), 1701-1734.

\bibitem{F}
K. Fujita, Optimal bounds for the volumes of K\"ahler--Einstein Fano manifolds. Amer. J. Math. \textbf{140} (2018), 391-414. 

\bibitem{FO}
K. Fujita, Y. Odaka, On the K-stability of Fano varieties and anticanonical divisors. Tohoku Math. J. (2) \textbf{70} (2018), 511-521.



\bibitem{K02}
S. Kebekus, Characterizing the projective space after Cho, Miyaoka and
Shepherd-Barron, Complex geometry (G\"ottingen, 2000), 147-155, Springer,
Berlin, 2002.

\bibitem{K96}
J. Koll\'ar. Rational curves on algebraic varieties. Ergebnisse der Mathematik und ihrer
Grenzgebiete. 3. Folge. A Series of Modern Surveys in Mathematics, \textbf{32}. Springer-Verlag,
Berlin, 1996. viii+320 pp

\bibitem{KMM}
J. Koll\'ar, Y. Miyaoka, and S. Mori, Rational connectedness and boundedness of Fano
manifolds. J. Differential Geom. \textbf{36} (1992), 765-779.


\bibitem{KL}
A. K\"uronya, V. Lozovanu, Positivity of line bundles and Newton--Okounkov bodies. Doc. Math. \textbf{22} (2017), 1285-1302.


\bibitem{L04}
R. Lazarsfeld, {Positivity in Algebraic Geometry, I,II}, Springer, 2004.

\bibitem{LM}
R. Lazarsfeld, M. Mustaţă, Convex bodies associated to linear series. Ann. Sci. \'Ec. Norm. Sup\'er. (4) \textbf{42} (2009), 783-835.


\bibitem{Litoric}
C.~Li, Greatest lower bounds on Ricci curvature for toric Fano manifolds, Adv. Math. \text{226} (2011),  4921--4932.





\bibitem{L14}
G. Liu, K\"ahler manifolds with Ricci curvature lower bound. Asian J. Math. \textbf{18} (2014), 69-99.

\bibitem{Liu18}
Y. Liu, The volume of singular K\"ahler--Einstein Fano varieties. Compos. Math. \textbf{154} (2018), 1131-1158.

\bibitem{LZ}
Y. Liu, Z. Zhuang, Characterization of projective spaces by Seshadri constants. Math. Z. \textbf{289} (2018), 25-38.


\bibitem{PS}
J. Park, J. Shin, Seshadri constants and Okounkov bodies revisited, arXiv:1812.07261 (2018).

\bibitem{R08}
Y.A. Rubinstein, Some discretizations of geometric evolution
equations and the Ricci iteration on the space of K\"ahler metrics,
Adv. Math. \textbf{218} (2008), 1526--1565.


\bibitem{R09}
Y.A. Rubinstein,
On the construction of Nadel multiplier ideal sheaves and the limiting behavior of the Ricci flow, Trans. Amer. Math. Soc. 361 (2009), 5839--5850.


\bibitem{SongWang}
J. Song, X. Wang, {The greatest Ricci lower bound, conical Einstein metrics and Chern number inequality},
Geom. Topol. 20 (2016), 49--102.


\bibitem{GaborGreatesLowerRicci}
G. Sz\'ekelyhidi, {Greatest lower bounds on the Ricci curvature of Fano manifolds},
Compositio Math. 147 (2011), 319--331.


\bibitem{TianGreatesLowerRicci}
G. Tian, On stability of the tangent bundles of Fano varieties, Internat. J. Math. 3 (1992), 401--413.

\bibitem{W}
F. Wang, A volume stability theorem on toric manifolds with positive Ricci curvature. Proc. Amer. Math. Soc. \textbf{143} (2015), 3613-3618.

\bibitem{Z20}
K. Zhang, Continuity of delta invariants and twisted K\"ahler--Einstein metrics, arXiv:2003.11858 (2020).

\end{thebibliography}
\end{document}